\DeclareMathOperator{\C}{C}
\DeclareMathOperator{\M}{M} \DeclareMathOperator{\Mor}{Mor}
 \DeclareMathOperator{\Rep}{Rep}
\DeclareMathOperator{\D}{D} \DeclareMathOperator{\Aut}{Aut}
\DeclareMathOperator{\Ad}{Ad}
\newtheorem{twr}{Theorem}[section]
\newtheorem{lem}[twr]{Lemma}
\theoremstyle{definition}
\newtheorem{defin}[twr]{Definition}
\newtheorem{uw}[twr]{Remark}
\newcommand{\id}{{\rm id}}
\newcommand{\HG}{\Hat\Gamma}
\newcommand{\CPG}{\C_0(G)^{\tilde\Psi\otimes\Psi}}
\newcommand{\ha}{\hat\alpha}
\newcommand{\hb}{\hat\beta}
\newcommand{\hd}{\hat\delta}
\newcommand{\hg}{\hat\gamma}
\newcommand{\hx}{\hat{x}}
\newcommand{\hy}{\hat{y}}
\newcommand{\hw}{\hat{w}}
\newcommand{\hrp}{\hat\rho^\Psi}
\newcommand{\CHRC}{\C_0(\mathcal{H})\rtimes\mathbb{C}}
\newcommand{\CGRC}{\C_0(G)\rtimes\mathbb{C}^2}
\newcommand{\CHP}{\C_0(\mathcal{H})^\Psi}
\newcommand{\DHP}{\Delta_{\mathcal{H}}^\Psi}
\newcommand{\CH}{\C_0(\mathcal{H})}
\newcommand{\Dh}{\Delta_{\mathcal{H}}}
\begin{document}
\subjclass[2000]{Primary 46L89, Secondary 20N99}
\title[]{Rieffel deformation of group coactions}
\author{P.~Kasprzak}
\address{Department of Mathematical Sciences, University of Copenhagen}
\address{On leave from Department of Mathematical Methods in Physics, Faculty of Physics, Warsaw University}
\thanks{Supported by the Marie Curie Research Training Network Non-Commutative Geometry MRTN-CT-2006-031962}
\thanks{Supported by Geometry and Symmetry of Quantum Spaces,
PIRSES-GA-2008-230836}
 \email{pawel.kasprzak@fuw.edu.pl}
\begin{abstract} Let $G$ be a locally compact group, $\Gamma\subset G$ an abelian subgroup and let $\Psi$ be a continuous $2$-cocycle on the dual group $\HG$. Let $B$ be a $\C^*$-algebra and $\Delta_B\in\Mor(B,B\otimes\C_0(G))$ a continuous right coaction.  Using Rieffel deformation, we can construct a quantum group $(\CPG,\Delta^\Psi)$ and the deformed $\C^*$-algebra $B^\Psi$. The aim of this paper is to present a construction of the continuous coaction $\Delta_B^\Psi$ of the quantum group $(\CPG,\Delta^\Psi)$ on $B^\Psi$. The transition from the coaction $\Delta_B$ to its deformed counterpart $\Delta_B^\Psi$ is nontrivial in the sense that $\Delta_B^\Psi$ contains complete information about $\Delta_B$. In order to illustrate our construction we apply it to the action of the Lorentz group on the Minkowski space obtaining a $\C^*$-algebraic quantum Minkowski space.
\end{abstract}
\maketitle \tableofcontents
\begin{section}{Introduction.} 
The Rieffel deformation is already a well established method of deforming $\C^*$-algebras. In his original approach  M. Rieffel starts from  deformation data $(A,\rho,J)$ which consists of a  $\C^*$-algebra $A$, an $\mathbb{R}^n$ action $\rho$ on $A$ and a skew symmetric matrix $J:\mathbb{R}^n\rightarrow\mathbb{R}^n$. Using these data M. Rieffel was able to deform the original product on the algebra of $\rho$-smooth elements: $A^\infty \subset A$. The deformed $\C^*$-algebra $A^J$ is defined as a $\C^*$-algebraic completion of $A^\infty$ equipped with this deformed product.

Our recent approach to the Rieffel deformation (see \cite{Kasp}) is based on the observation that deformation data $(A,\rho,J)$ enable us to deform an $\mathbb{R}^n$-product structure emerging in the crossed product construction: $(A\rtimes\mathbb{R}^n,\lambda,\hat\rho)$. Namely, using a skew-symmetric matrix $J$ we may deform a dual action obtaining $\hat\rho^J$. The Rieffel deformation $A^J$ is defined as the Landstad algebra of the deformed $\mathbb{R}^n$-product $(A\rtimes\mathbb{R}^n,\lambda,\hat\rho^J)$. This second  approach  generalizes to the cases when the deformation data consist  of $(A,\rho,\Psi)$, where $A$ is a $\C^*$-algebra equipped with an action $\rho$ of an abelian group $\Gamma$ and $\Psi$ is a continuous $2$-cocycle on the dual group $\HG$. The concise account of the deformation procedure (containing the definition of Landstad algebra) is the subject of Section \ref{sec2}.

The Rieffel deformation which was originally developed to deform $\C^*$-algebras can be also used for deforming locally compact groups (see \cite{lan1}, \cite{Rf2}). Namely, let $G$ be a locally compact group, $\Gamma$ a closed abelian subgroup and  $\Psi$ a continuous $2$-cocycle on $\HG$. Using the action of $\Gamma^2$ on $\C_0(G)$ given by the left and the right shifts and defining a particular $2$-cocycle $\tilde\Psi\otimes\Psi$ on $\HG^2$ we may perform the Rieffel deformation of $\C_0(G)$. This leads to  a $\C^*$-algebra $\CPG$, which can be shown to carry the structure of a locally compact quantum group. We shall denote this quantum group by $\mathbb{G}^\Psi=(\CPG,\Delta^\Psi)$.

Let $B$ be a $\C^*$-algebra equipped with a continuous coaction $\Delta_B\in\Mor(B,B\otimes\C_0(G))$. The coaction $\Delta_B$ corresponds to an action $\beta:G\rightarrow\Aut(B)$ of $G$ on $B$. Restricting  this action  to the subgroup $\Gamma\subset G$ we get an action of $\Gamma$ on $B$. This lets us construct  the deformed $\C^*$-algebra $B^\Psi$. The aim of this paper is to show that the coaction $\Delta_B\in\Mor(B,B\otimes\C_0(G))$ can be naturally deformed to a coaction $\Delta_B^\Psi\in\Mor(B^\Psi,B^\Psi\otimes\C_0(G)^{\tilde{\Psi}\otimes\Psi})$. The transition from $\Delta_B$ to $\Delta_B^\Psi$ is nontrivial in the sense that $\Delta_B^\Psi$ contains all the information about $\Delta_B$ so it is possible to reconstruct $\Delta_B$ out of $\Delta_B^\Psi$. 

A particular case of the construction presented in our paper has been discussed in \cite{Varr}. In this paper J. Varilly treats the situation where there is given a pair $\Gamma\subset K\subset G$ of closed subgroups with $\Gamma$ being abelian and $G$ compact. It was then shown that it is possible to perform a covariant deformation of the $\C^*$-algebra $\C_0(K\backslash G)$. The resulting $\C^*$-algebra $\C_0(K\backslash G)^\Psi$ is equipped with a continuous ergodic coaction of the quantum group $\mathbb{G}^\Psi$. In this specific situation the difficulties that one encounters in general do not manifest themselves.  
 
In the final section of our paper we apply our deformation procedure to the action of the Lorentz group (more precisely of $SL(2,\mathbb{C})$) on the Minkowski space $M$. We base the deformation procedure on the subgroup $\Gamma\subset SL(2,\mathbb{C})$ consisting of the diagonal matrices. The quantum Lorentz group that we obtain was already described in \cite{Kasp}. In this paper we focus on the  quantum Minkowski space describing it in terms of the generators of the $\C^*$-algebra $\C_0(M)^\Psi$ which satisfy  $(p,q)$-type commutation relations. In particular, we give a description of the twisted  coaction $\Delta_M^\Psi$ in terms of its action on the generators.  

Throughout the paper we will freely use the language of
$\C^*$-algebras and the theory of locally compact quantum groups.
For the notions of multipliers, affiliated elements, algebras
generated by a family of affiliated elements and the category of
$\C^*$-algebras we refer the reader to \cite{W4} and \cite{W10}. In particular the morphism of $\C^*$-algebras will always be Woronowicz morphisms (see 0.4 \cite{W10}). 
For the theory of locally compact quantum groups we refer to
\cite{kv} and \cite{MNW}. For any subset $X$ of a Banach space $B$, $[X]\subset B$ will denote the closed linear span of $X$.
\end{section}
\begin{section}{Rieffel Deformation via crossed products}\label{sec2}
Throughout the paper we shall use the crossed products approach to the Rieffel deformation. For the detailed treatment of the subject we refer to \cite{Kasp}. In what follows we shall give a concise account of the deformation procedure.
The deformation data $(A,\rho,\Psi)$ consist of a $\C^*$-algebra $A$, an action $\rho$ of an abelian group $\Gamma$ and a continuous $2$-cocycle $\Psi$ on the dual group $\Gamma$. The deformation procedure consists of the following steps:\\
1. Let $B$ be the  crossed product $\C^*$-algebra $B=A\rtimes\Gamma$ and  let $(B,\lambda,\hat\rho)$ be the $\Gamma$-product structure on this  crossed product i.e. $\lambda:\Gamma\rightarrow\M(B)$ is the representation of $\Gamma$ implementing the action $\rho$ and $\hat\rho$ is the dual action on $B$.\\
2. Let $\lambda\in\Mor(\C^*(\Gamma),B)$ be the morphism corresponding to the representation $\lambda\in\Rep(\Gamma,B)$ and let $\Psi_{\hg}\in\M(\C_0(\HG))$ be the family of functions given by $\Psi_{\hg}(\hg')=\Psi(\hg',\hg)$. Applying $\lambda$ to $\Psi_{\hg}$ (note the identification of $\C_0(\HG)$ with $\C^*(\Gamma)$ via the Fourier transform), we get a $\hat\rho$-projective 1-cocycle  $U_{\hg}\in\M(B)$. Using $U_{\hg}$ we define the deformed dual action $\hat\rho^\Psi:\HG\rightarrow\Aut(B)$ by the formula: $\hat\rho_{\hg}(b)=U_{\hg}^*\hat\rho_{\hg}(b)U_{\hg}$,\,\,for any $\hg\in\HG$ and $b\in B$.\\
3. $\C^*$-algebra $A^\Psi$ is defined as the Landstad algebra of the deformed $\Gamma$-product $(B,\lambda,\hat\rho^\Psi)$:
\[A^\Psi=\left\{b\in M(B)\left|\begin{array}{l}1.\,\,\hat\rho^\Psi_{\hg}(b)=b\\
2.\,\,\mbox{The map }\Gamma\ni\gamma\mapsto\lambda_\gamma a\lambda_\gamma^*\in \M(B)\\
\mbox{\,\,\,\,\,\, is norm-continuous}\\
3.\,\,\lambda(x)a,a\lambda(x)\in B \mbox{ for any }x\in \C^*(\Gamma)
\end{array}\right.\right\}\]
\begin{uw}\label{cov}
In the course of this paper we shall use the functorial properties of the Rieffel deformation (see Section 3.2, \cite{Kasp}). Let $(A,\rho,\Psi)$ and $(A',\rho',\Psi)$ be deformation data and let $\pi\in\Mor(A,A')$ be a covariant morphism: $\pi(\rho_\gamma(a))=\rho'_{\gamma}(\pi(a))$ for any $\gamma\in\Gamma$ and $a\in A$. Then there exists a morphism $\pi^\Gamma\in\Mor(A\rtimes\Gamma,A'\rtimes\Gamma)$ that sends $A\subset\M(A\rtimes\Gamma)$ to $A'\subset\M(A'\rtimes\Gamma)$ by means of $\pi$ and which is identity on $\C^*(\Gamma)\subset\M(A\rtimes\Gamma)\cap\M(A'\rtimes\Gamma)$. Restricting $\pi^\Gamma$ to $A^\Psi$ we get a morphism $\pi^\Psi\in\Mor(A^\Psi,A'^\Psi)$.
\end{uw}
\end{section}
\begin{section}{Technicalities}
This section is divided into two parts. The first part deals with some specific situations encountered while performing the Rieffel deformation of $\C^*$-algebras. The results derived here will be needed in a  construction of the deformed coaction $\Delta_B^\Psi$. In the second part we shall  construct the product of $(p,q)$-commuting elements affiliated with a $\C^*$-algebra. This construction will be useful in the description of the quantum Minkowski space which is the subject of the last section.
\begin{subsection}{Technicalities of the Rieffel deformation}
Let $\rho$ be an action of $\Gamma^2$ on a $\C^*$-algebra $A$, such that  $\rho_{\gamma,\gamma}=\id$ for any $\gamma\in\Gamma$. Let $\Psi$ be a continuous $2$-cocycle on $\HG$.
Using $\Psi$ we may define a $2$-cocycle $\tilde\Psi$: \begin{equation}\label{tilpsi}\tilde\Psi(\hg_1,\hg_2)=\overline{\Psi(-\hg_1,-\hg_2)}.\end{equation} The $2$-cocycle on $\HG^2$ that we shall use in the deformation procedure is the tensor product:  $\Psi\otimes\tilde\Psi$. The family of unitaries $U_{\hg_1,\hg_2}\in\M(A\rtimes \Gamma^2)$  is obtained by taking the images of  $\Psi_{\hg_1}\otimes\tilde\Psi_{\hg_2}\in\M(\C_0(\HG)\otimes\C_0(\HG))$ under the morphism $\lambda\in\Mor(\C_0(\HG)\otimes\C_0(\HG),A\rtimes\Gamma^2)$: 
\[U_{\hg_1,\hg_2}=\lambda(\Psi_{\hg_1}\otimes\tilde\Psi_{\hg_2})\in\M(A\rtimes \Gamma^2).\] For the definition of $\Psi_{\hg_1}$ and $\tilde\Psi_{\hg_2}$ see the previous section.
The deformed dual action $\hat\rho^{\Psi\otimes\tilde\Psi}$ is defined by the formula  \[\hat\rho^{\Psi\otimes\tilde\Psi}_{\hat\gamma_{1},\hat\gamma_{2}}(b)=U_{\hg_1,\hg_2}^*\hat\rho_{\hat\gamma_{1},\hat\gamma_{2}}(b)U_{\hg_1,\hg_2}\] for any $b\in A\rtimes\Gamma^2$.
\begin{lem}\label{iso}
Let $(A\rtimes\Gamma^2,\lambda,\hat\rho^{\Psi\otimes\tilde\Psi})$ be the $\Gamma^2$-product introduced above. Then the Landstad algebra $A^{\Psi\otimes\tilde\Psi}$ is isomorphic with $A$. 
\end{lem}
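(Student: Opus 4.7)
The plan is to reduce the computation of $A^{\Psi\otimes\tilde\Psi}$ to the statement that the Landstad algebra of the undeformed $\Gamma^2$-product $(A\rtimes\Gamma^2,\lambda,\hat\rho)$ is $A$, by showing that the cocycle $\Psi\otimes\tilde\Psi$ is trivial in the only direction that actually acts on $A$.

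Since $\rho_{\gamma,\gamma}=\id$ for all $\gamma\in\Gamma$, the action $\rho$ factors through the quotient $\Gamma^2/\Delta\Gamma$ with $\Delta\Gamma=\{(\gamma,\gamma):\gamma\in\Gamma\}$, and the only direction in $\HG^2$ relevant to the deformation is the annihilator $\Delta\Gamma^\perp=\{(\hg,-\hg):\hg\in\HG\}$. A direct computation using $\tilde\Psi(\hg_1,\hg_2)=\overline{\Psi(-\hg_1,-\hg_2)}$ yields
\[
(\Psi\otimes\tilde\Psi)\bigl((\hg,-\hg),(\hg',-\hg')\bigr)=\Psi(\hg,\hg')\,\overline{\Psi(\hg,\hg')}=1,
\]
so the deforming cocycle restricts trivially to $\Delta\Gamma^\perp$; this is the conceptual reason the deformation should leave $A$ unchanged.

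I would then fix the splitting $\Gamma^2=Q\oplus\Delta\Gamma$ with $Q=\{(\gamma,e):\gamma\in\Gamma\}$ and the dual splitting $\HG^2=\widehat Q\oplus\widehat{\Delta\Gamma}$, in which $\widehat Q$ is identified with $\Delta\Gamma^\perp$. Under this splitting one has the tensor factorisation $A\rtimes\Gamma^2\cong(A\rtimes_\sigma\Gamma)\otimes\C^*(\Delta\Gamma)$ with $\sigma_\gamma:=\rho_{\gamma,e}$, and the undeformed dual action takes the product form $\hat\sigma\otimes\tau$, where $\tau$ is translation on $\C^*(\Delta\Gamma)\cong\C_0(\widehat{\Delta\Gamma})$; by Landstad duality its Landstad algebra is $A\otimes\mathbb{C}=A$. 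Using the above triviality, I would then construct a unitary $W\in\M(\C^*(\Gamma^2))\subset\M(A\rtimes\Gamma^2)$ whose $\hat\rho$-coboundary $W^*\hat\rho_{\hg_1,\hg_2}(W)$ coincides with $U_{\hg_1,\hg_2}$ up to a unitary supported in $\M(\C^*(\Delta\Gamma))$; conjugation $\Ad W^*$ then carries $\hat\rho^{\Psi\otimes\tilde\Psi}$ into an action of tensor form whose Landstad algebra is still $A\otimes\mathbb{C}$, and the required isomorphism is $a\mapsto W^*aW$. The three Landstad conditions for $W^*aW$ all follow straightforwardly: the fixed-point condition from the construction of $W$, and the norm-continuity of $(\gamma_1,\gamma_2)\mapsto\lambda_{\gamma_1,\gamma_2}(W^*aW)\lambda_{\gamma_1,\gamma_2}^*$ together with the spectral condition $\lambda(x)W^*aW,\,W^*aW\lambda(x)\in A\rtimes\Gamma^2$ from the corresponding properties of $a$ and $W\in\M(\C^*(\Gamma^2))$.

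The main obstacle is the construction of $W$. Because $\Psi\otimes\tilde\Psi$ is in general not a global coboundary on $\HG^2$, a product ansatz $W(\hg_1,\hg_2)=F(\hg_1)G(\hg_2)$ cannot succeed; one must instead exploit the triviality on the transversal $\widehat Q\cong\Delta\Gamma^\perp$, together with the splitting, to produce a bounded continuous trivialisation on $\HG^2$, and check that the residual cocycle in the $\widehat{\Delta\Gamma}$-direction does not affect the resulting Landstad algebra. Ensuring that $W$ genuinely lies in $\M(\C^*(\Gamma^2))$ and that $\Ad W^*$ maps the Landstad subalgebras as claimed is where the technical work concentrates.
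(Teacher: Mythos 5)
Your overall strategy is the same as the paper's: conjugate by a unitary $W\in\M(\C^*(\Gamma^2))$ whose $\hat\rho$-coboundary reproduces $U_{\hg_1,\hg_2}$ up to unitaries coming from the diagonal (these are central, since $\rho_{\gamma,\gamma}=\id$ forces $\lambda_{\gamma,\gamma}$ central), and then transport the Landstad algebra of the untwisted dual action. Your surrounding observations are correct and match the paper: the cocycle $\Psi\otimes\tilde\Psi$ restricts trivially to the antidiagonal $\{(\hg,-\hg)\}$, the verification of Landstad conditions 2 and 3 for $W^*aW$ is immediate because $\C^*(\Gamma^2)$ is commutative and $W$ is one of its multipliers, and the residual central cocycle genuinely drops out of the fixed-point condition. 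The paper even shares your economy of effort on the converse inclusion (it does not rerun the conjugacy argument but invokes a symmetry argument with $\bar\Psi$ and Lemma 3.5 of \cite{Kasp}).

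However, there is a genuine gap, and you have named it yourself: the unitary $W$ is never constructed. That construction is not a technical remainder to be deferred --- it is the entire content of the lemma. Triviality of a $2$-cocycle on a subgroup (here the antidiagonal) does not by itself produce a trivialisation on the whole group, and your plan stops exactly at the point where an explicit formula or an existence argument is required. The paper supplies it in closed form: $\Upsilon=\lambda(\Psi^\star)$ with
\[
\Psi^\star(\hg_1,\hg_2)=\overline{\Psi(\hg_1,-\hg_1-\hg_2)},
\]
and the $2$-cocycle identity for $\Psi$ then yields the key relation \eqref{sumup}, i.e.
$\hat\rho_{\hg_1,\hg_2}(\Upsilon)=U_{\hg_1,\hg_2}\,\Upsilon\,Z_{\hg_1,\hg_2}$ with $Z_{\hg_1,\hg_2}$ a central unitary depending on $\hg_1,\hg_2$ only through functions of the ``sum'' variable. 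Note that $\Psi^\star$ is exactly the kind of non-product-form function you correctly predicted would be needed, and that your tensor factorisation $A\rtimes\Gamma^2\cong(A\rtimes_\sigma\Gamma)\otimes\C^*(\Delta\Gamma)$, while true, is not needed once $\Psi^\star$ is in hand: the centrality of $Z$ lets one check the three Landstad conditions for $\Upsilon a\Upsilon^*$ directly, without ever decomposing the crossed product. So your proposal is a correct reduction plus an accurate diagnosis of the hard step, but as it stands it is a plan rather than a proof.
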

\begin{proof}
Let us introduce an auxiliary function $\Psi^\star:\Hat\Gamma^2\rightarrow \mathbb{T}$  given by 
$\Psi^\star(\hg_1,\hg_2)=\overline{\Psi(\hg_1,-\hg_1-\hg_2)}$. 
Using the $2$-cocycle property of $\Psi$ one can prove that:
\begin{equation}\label{sumup}\Psi^\star(x+u,y+v)=\Psi^\star(x,y)\Psi_{u}(x)\tilde\Psi_{v}(y)\Psi(-x-y,-v)\overline{\Psi(u,-x-y)}\end{equation} for any $x,y,u,v\in\Hat{\Gamma}$.
Using the morphism $\lambda\in\Mor(\C^*(\Gamma^2),A\rtimes\Gamma^2)$ we define a unitary element   $\Upsilon=\lambda(\Psi^\star)$.
The above formula may be expressed in terms of $\Upsilon$ and $U_{\hg_1,\hg_2}$ \begin{equation}\label{Upact}\hat\rho_{\hg_1,\hg_{2}}(\Upsilon)=U_{\hg_1,\hg_2}\,\,\Upsilon\, Z_{\hat\gamma_1,\hat\gamma_2}\end{equation}
where $Z_{\hat\gamma_1,\hat\gamma_2}\in\M(A\rtimes\Gamma^2)$ corresponds to the last two factors in the product on the right hand side of \eqref{sumup}. Let us note that the equality $\rho_{\gamma,\gamma}(a)=a$  implies that $\lambda_{\gamma,\gamma}$ is in the center of the $\M(A\rtimes\Gamma^2)$. Using this fact one can conclude that $\lambda(\Delta(f))$ is in the center of $\M(A\rtimes\Gamma^2)$ for any $f\in\M(\C_0(\HG))$. In particular, $Z_{\hat\gamma_1,\hat\gamma_2}\in\M(A\rtimes\Gamma^2)$ is central for any $\hat\gamma_1,\hat\gamma_2\in\Hat\Gamma$. 

Let us move on to the main part of the proof. For any $a\in A$ we define 
$\pi(a)=\Upsilon a\Upsilon^*\in\M(A\rtimes\Gamma^2)$. Our aim is to show that $\pi(a)\in A^{\Psi\otimes\tilde\Psi}$. In order to do that we must check the three Landstad conditions for an element $\pi(a)\in\M(A\rtimes\Gamma^2)$.\\
1. The invariance of $\pi(a)$ under the twisted dual action $\hat\rho^{\Psi\otimes\tilde\Psi}$ can be checked as follows:
\begin{align*}
\hat\rho^{\Psi\otimes\tilde\Psi}_{\hat\gamma_{1},\hat\gamma_{2}}(\Upsilon a\Upsilon^*)&=
U_{\hg_1,\hg_2}^*\,\hat\rho_{\hg_{1},\hg_{2}}(\Upsilon a\Upsilon^*)\,U_{\hg_1,\hg_2}\\&=\Upsilon Z_{\hat\gamma_1,\hat\gamma_2}aZ_{\hat\gamma_1,\hat\gamma_2}^*\Upsilon^*=\Upsilon a\Upsilon^*=\pi(a),
\end{align*}
where in the last line we used Eq. \eqref{Upact} and the fact that $Z$ is a central element.\\
2. In order to check the second Landstad condition for $\pi(a)$ we note that $\lambda_{\gamma_1,\gamma_2}\pi(a)\lambda_{\gamma_1,\gamma_2}^*=\pi(\lambda_{\gamma_1,\gamma_2}a\lambda_{\gamma_1,\gamma_2}^*)$. This equality, together with the second Landstad condition for $a\in A$, shows that the map $\Gamma^2\ni(\gamma_1,\gamma_2)\mapsto \lambda_{\gamma_1,\gamma_2}\pi(a)\lambda_{\gamma_1,\gamma_2}^*\in\M(A\rtimes\Gamma^2)$ is norm continuous.\\
3. Using the third Landstad condition for $a\in A$ we get \[x\pi(a)y=(x\Upsilon) a(\Upsilon^* y)\in A\rtimes\Gamma^2\] for any $x,y\in \C^*(\Gamma^2)$. \\
The above reasoning shows that $\pi(A)\subset A^{\Psi\otimes\tilde\Psi}$. In order to prove the opposite inclusion we have to switch   $A$ and $A^{\Psi\otimes\tilde\Psi}$, take $\bar\Psi$ as a $2$-cocycle and use Lemma 3.5 of \cite{Kasp}.
\end{proof}
Now let us pass to the second specific situation that may be encountered while performing Rieffel deformation. Assume that we are given two commuting actions $\alpha$ and $\beta$ of locally compact abelian groups  $\Gamma_1$ and $\Gamma_2$ on a $\C^*$-algebra $A$:
\[\alpha_{\gamma_1}\circ\beta_{\gamma_2}(a)=\beta_{\gamma_2}\circ\alpha_{\gamma_1}(a)\] for any $a\in A$, $\gamma_1\in\Gamma_1$ and $\gamma_2\in\Gamma_2$. Fixing $\gamma_2$ in the above formula  we see that the automorphism $\beta_{\gamma_2}$ is  $\alpha$-covariant. Now let $\Psi_1$ and $\Psi_2$ be continuous $2$-cocycles on $\HG_1$ and $\HG_2$ respectively. Using Proposition 3.8 of \cite{Kasp} we get the deformed automorphism: $\beta^{\Psi_1}_{\gamma_2}\in\Aut(A^{\Psi_1})$. By the functorial properties of the Rieffel deformation (see Section 3.2 of \cite{Kasp}) it follows that  $\Gamma\ni\gamma\mapsto\beta^{\Psi_1}_{\gamma}\in\Aut(A^\Psi_1)$ is an action of $\Gamma_2$ on $A^{\Psi_1}$.  The continuity of that action demands an additional reasoning. To check that  for any $a\in A^{\Psi_1}$ the map 
\[\Gamma_2\ni\gamma\mapsto\beta^{\Psi_1}_{\gamma}(a)\in A^{\Psi_1}\] is norm continuous, we have to invoke the averaging map $\mathfrak{E}^{\Psi_1}:\D(\mathfrak{E}^{\Psi_1})\rightarrow \M(A\rtimes \Gamma_1)$ (see Remark 2.5, \cite{Kasp}). Let $a\in A$ and $f_1,f_2\in\C_0(\Hat\Gamma_1)$ be functions of  compact support. The elements of the form $\mathfrak{E}^{\Psi_1}(f_1af_2)$ constitute a dense subset of $A^{\Psi_1}$ and the map \begin{equation}\label{condef} A\ni a\mapsto \mathfrak{E}^{\Psi_1}(f_1af_2)\in A^{\Psi_1}\end{equation} is norm continuous. Using the definition of $\beta^{\Psi_1}$ (see Remark \ref{cov}) we get : \[\beta^{\Psi_1}_{\gamma_2}\left(\mathfrak{E}^{\Psi_1}(f_1af_2)\right)=\mathfrak{E}^{\Psi_1}(f_1\beta_{\gamma_2}(a)f_2).\] The above formula, the continuity of map \eqref{condef} and the continuity of the $\Gamma_2$-action $\beta$, together imply  that $\beta^{\Psi_1}$ is a continuous $\Gamma_2$-action.

A similar reasoning, with the roles of $\Gamma_1$ and $\Gamma_2$ reversed, leads to a continuous $\Gamma_1$-action $\alpha^{\Psi_2}$ on $A^{\Psi_2}$. There arises the natural question  concerning the relation between $A^{\Psi_1}$ and $A^{\Psi_2}$. Before answering it let us note that the two commuting actions $\alpha$ and $\beta$, give rise to an action $\alpha\times\beta$ of $\Gamma_1\times\Gamma_2$ on $A$. Tensoring $\Psi_1$ and $\Psi_2$ we get a $2$-cocycle on $\Hat\Gamma_1\times\Hat\Gamma_2$. A Fubini type theorem for the averaging maps $\mathfrak{E}^{\Psi_1}$, $\mathfrak{E}^{\Psi_2}$ and $\mathfrak{E}^{\Psi_1\otimes\Psi_2}$ and a functorial gymnastics enables us to prove the following lemma.
\begin{lem}\label{comact} Let $\alpha,\beta$ and $\alpha\times\beta$ be actions on $A$ of $\Gamma_1$, $\Gamma_2$ and $\Gamma_1\times\Gamma_2$  respectively, as introduced above.  Let $\beta^{\Psi_1}$ be the $\Gamma_2$-action on $A^{\Psi_1}$ and $\alpha^{\Psi_2}$ be the $\Gamma_1$-action  on $A^{\Psi_2}$. Then $(A^{\Psi_1})^{\Psi_2}=(A^{\Psi_2})^{\Psi_1}=A^{\Psi_1\otimes \Psi_1}$. 
\end{lem}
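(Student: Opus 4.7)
The approach is to identify dense subsets of $(A^{\Psi_1})^{\Psi_2}$ and $A^{\Psi_1\otimes\Psi_2}$ via averaging maps and match them element-wise through a Fubini-type identity. By the symmetric role of $\Psi_1$ and $\Psi_2$ it suffices to prove $(A^{\Psi_1})^{\Psi_2}=A^{\Psi_1\otimes\Psi_2}$; the second equality follows by swapping the indices $1\leftrightarrow 2$, which in turn forces the two iterated deformations to agree.

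First I would organize the crossed product structure. Identify the iterated crossed product $(A\rtimes\Gamma_1)\rtimes\Gamma_2$ with $A\rtimes(\Gamma_1\times\Gamma_2)$ using the lift of $\beta$ to $A\rtimes\Gamma_1$ guaranteed by Remark \ref{cov} (applicable since each $\beta_{\gamma_2}$ is $\alpha$-covariant). Under this identification the deformation unitaries factor as a commuting product: $U^{\Psi_1\otimes\Psi_2}_{\hg_1,\hg_2}=U^{\Psi_1}_{\hg_1}U^{\Psi_2}_{\hg_2}$, since each factor comes from a separate copy of $\Gamma_i\subset\Gamma_1\times\Gamma_2$. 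Consequently the deformed dual action $\hat\rho^{\Psi_1\otimes\Psi_2}_{\hg_1,\hg_2}$ decomposes as the composition of the two single-factor deformed dual actions, and the three Landstad conditions defining $A^{\Psi_1\otimes\Psi_2}$ become the iterated Landstad conditions defining $(A^{\Psi_1})^{\Psi_2}$.

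Second, I would produce explicit dense subsets via the averaging map (Remark 2.5 of \cite{Kasp}). Elements of the form $\mathfrak{E}^{\Psi_1\otimes\Psi_2}\bigl((f_1\otimes f_2)\,a\,(g_1\otimes g_2)\bigr)$, with $a\in A$ and compactly supported $f_i,g_i\in\M(\C_0(\HG_i))$, span a norm-dense subset of $A^{\Psi_1\otimes\Psi_2}$; expressions $\mathfrak{E}^{\Psi_2}\bigl(f_2\,\mathfrak{E}^{\Psi_1}(f_1 a g_1)\,g_2\bigr)$ similarly span a norm-dense subset of $(A^{\Psi_1})^{\Psi_2}$, since $\mathfrak{E}^{\Psi_1}(f_1 a g_1)$ is dense in $A^{\Psi_1}$. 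The Fubini identity to establish is
\[\mathfrak{E}^{\Psi_1\otimes\Psi_2}\bigl((f_1\otimes f_2)\,a\,(g_1\otimes g_2)\bigr)=\mathfrak{E}^{\Psi_2}\bigl(f_2\,\mathfrak{E}^{\Psi_1}(f_1 a g_1)\,g_2\bigr),\]
obtained by writing $\mathfrak{E}^{\Psi_1\otimes\Psi_2}$ as an iterated integral over $\HG_1\times\HG_2$ against the twisted dual action, interchanging the order of integration (justified by compact support of the test functions), and exploiting the commutativity of $\alpha$ and $\beta$ to separate the two cocycle twists $\Psi_1$ and $\Psi_2$.

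Once the identity is available the two dense subsets coincide element-wise, forcing equality of their norm closures, and the first step ensures the ambient multiplier algebras and Landstad constraints match up correctly. The principal obstacle is the Fubini identity itself: it requires unfolding $\mathfrak{E}$ as an operator-valued integral against the twisted dual action, carefully tracking the cocycle factors when the two integrations are swapped, and verifying the convergence of the resulting double integral — precisely the "functorial gymnastics" alluded to in the excerpt. With this step secured, the verification of the Landstad conditions is automatic from the crossed-product decomposition, and the density argument closes the proof.
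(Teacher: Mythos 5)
Your proposal takes essentially the same route as the paper, which in fact gives no detailed proof of this lemma but only the remark that ``a Fubini type theorem for the averaging maps $\mathfrak{E}^{\Psi_1}$, $\mathfrak{E}^{\Psi_2}$ and $\mathfrak{E}^{\Psi_1\otimes\Psi_2}$ and a functorial gymnastics'' establish it. Your identification of $(A\rtimes\Gamma_1)\rtimes\Gamma_2$ with $A\rtimes(\Gamma_1\times\Gamma_2)$, the factorization of the deformation unitaries, and the Fubini identity matching the dense subsets produced by the averaging maps are precisely a fleshed-out version of that sketch, and the argument is sound.
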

\begin{uw}\label{uwid} Let $\alpha,\beta$ be as in the above lemma. Let $\gamma\in\Gamma_2$ and $\beta_{\gamma}=\id$. Using the functorial properties of the Rieffel deformation we get $\beta^{\Psi_1}_{\gamma}=\id^{\Psi}=\id$.
\end{uw}
As the last instance of this subsection, let us consider a situation in which we have a $\C^*$-algebra $A$ acted on by a group $\Gamma$ and assume that $B$ and $C$ are subalgebras of $\M(A)$ such that $[BC]=A$.  Assume also that $\rho$ can be restricted to continuous actions on $B$ and on $C$. Note that the embedding of $B$ into $\M(A)$ is a morphism in the sense of Woronowicz: \[A\supset [BA]\supset [BCA]=[AA]=A. \]  Similarly, the embedding of  $C$ into $\M(A)$ is a morphism. Using Proposition 3.8 of \cite{Kasp} we get the embeddings of $B^\Psi$ and $C^\Psi$ into $\M(A^\Psi)$. Our aim is to show that $A^\Psi=[B^\Psi C^\Psi]$. 
\begin{lem}\label{prodalg} Let $A$, $B$ and $C$ be $\C^*$-algebras introduced above.  Then $A^\Psi=[B^\Psi C^\Psi]$. 
\end{lem}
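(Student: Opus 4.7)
The plan is to prove the two inclusions $[B^\Psi C^\Psi]\subset A^\Psi$ and $A^\Psi\subset[B^\Psi C^\Psi]$ separately. The forward inclusion is a direct check of Landstad's three conditions on products, while the reverse inclusion will be approached through the density of averaged elements in $A^\Psi$ together with the density $[BC]=A$ at the undeformed level.

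For the forward inclusion, view $b\in B^\Psi$ and $c\in C^\Psi$ as elements of $\M(A\rtimes\Gamma)$ via the embeddings $\M(B\rtimes\Gamma),\M(C\rtimes\Gamma)\hookrightarrow\M(A\rtimes\Gamma)$ coming from Remark~\ref{cov}. Invariance of $bc$ under $\hat\rho^\Psi$ and norm continuity of $\gamma\mapsto\lambda_\gamma bc\lambda_\gamma^*$ follow since $\hat\rho^\Psi_{\hg}$ and $\Ad\lambda_\gamma$ are $*$-homomorphisms. For the third Landstad condition, $\lambda(x)b\in B\rtimes\Gamma$ and may be approximated in norm by finite sums $\sum_i b_i\lambda(y_i)$ with $b_i\in B$ and $y_i\in\C^*(\Gamma)$; since $\lambda(y_i)c\in C\rtimes\Gamma$ by the corresponding condition on $c$, and $B\cdot(C\rtimes\Gamma)\subset[BC]\cdot\lambda(\C^*(\Gamma))=A\rtimes\Gamma$, passing to the limit yields $\lambda(x)bc\in A\rtimes\Gamma$; the mirror condition $bc\lambda(x)\in A\rtimes\Gamma$ is symmetric.

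For the reverse inclusion, recall that elements of the form $\mathfrak{E}^\Psi(f_1 a f_2)$ with $a\in A$ and $f_1,f_2\in\C_0(\HG)$ of compact support form a norm-dense subset of $A^\Psi$. Since $[BC]=A$, approximate each such $a$ by finite sums $\sum_j b_j c_j$ with $b_j\in B$, $c_j\in C$, and use the norm continuity of the map \eqref{condef} to reduce the question to showing $\mathfrak{E}^\Psi(f_1 bc f_2)\in[B^\Psi C^\Psi]$ for single factors. For this, one approximates the latter by products $\mathfrak{E}^\Psi(f_1 b g_n)\cdot\mathfrak{E}^\Psi(g_n c f_2)\in B^\Psi\cdot C^\Psi$, where $(g_n)\subset\C_0(\HG)$ is a compactly-supported net chosen so that the double averaging integral concentrates on the diagonal of $\HG\times\HG$ and collapses, in the limit, to the single integral defining $\mathfrak{E}^\Psi(f_1 bc f_2)$; the identity $\hat\rho^\Psi_{\hg}(b)\hat\rho^\Psi_{\hg}(c)=\hat\rho^\Psi_{\hg}(bc)$ is the key input that makes the two coincide.

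The main obstacle is the rigorous execution of this diagonal-collapse convergence, which requires careful strict-topology estimates in $\M(A\rtimes\Gamma)$ and a judicious choice of Dirac-type approximating net $(g_n)$. A cleaner alternative that avoids this step would be to first establish, at the crossed product level, $A\rtimes\Gamma=[(B\rtimes\Gamma)(C\rtimes\Gamma)]$ (which follows from $[BC]=A$ together with the covariance relation $\lambda_\gamma c\lambda_\gamma^*\in C$), and then use the fact that $\mathfrak{E}^\Psi$ restricted to $B\rtimes\Gamma$ and $C\rtimes\Gamma$ has image in $B^\Psi$ and $C^\Psi$ respectively (a consequence of Remark~\ref{cov}) to transfer the factorization from the crossed products to the Landstad algebras.
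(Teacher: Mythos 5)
Your forward inclusion $[B^\Psi C^\Psi]\subset A^\Psi$ is essentially sound: the three Landstad conditions for a product $bc$ do follow along the lines you indicate, the only point left implicit being that the deformed dual action of the ambient $\Gamma$-product $(A\rtimes\Gamma,\lambda,\hat\rho^\Psi)$ restricts, under the embeddings of Remark~\ref{cov}, to the deformed dual actions defining $B^\Psi$ and $C^\Psi$ (standard functoriality). This part is also not wasted work, since it is what legitimizes the tool the paper actually uses.

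The genuine gap is the reverse inclusion $A^\Psi\subset[B^\Psi C^\Psi]$. Your primary route --- approximating $\mathfrak{E}^\Psi(f_1bcf_2)$ by products $\mathfrak{E}^\Psi(f_1bg_n)\,\mathfrak{E}^\Psi(g_ncf_2)$ --- is not a proof and has no visible mechanism to become one: the product of two averages is an integral over $\HG\times\HG$ of $\hat\rho^\Psi_{\hg_1}(f_1bg_n)\hat\rho^\Psi_{\hg_2}(g_ncf_2)$, and the functions $g_n$ sit inside the integrands, not in the integration variables, so no choice of $g_n$ forces this double integral to concentrate on the diagonal; $\mathfrak{E}^\Psi$ is simply not multiplicative, and you yourself concede the convergence is unproved. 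Your ``cleaner alternative'' correctly isolates the crossed-product-level identity --- indeed $[\C^*(\Gamma)B^\Psi C^\Psi\C^*(\Gamma)]=[\C^*(\Gamma)BC\C^*(\Gamma)]=[\C^*(\Gamma)A\C^*(\Gamma)]=A\rtimes\Gamma$, using $[\C^*(\Gamma)B^\Psi]=B\rtimes\Gamma=[\C^*(\Gamma)B]$ and the analogous equality $[C^\Psi\C^*(\Gamma)]=C\rtimes\Gamma=[C\C^*(\Gamma)]$, and this is exactly the displayed computation in the paper --- but the ``transfer'' back to the Landstad algebras is precisely where the remaining content lies, and your proposed mechanism does not supply it: knowing that $\mathfrak{E}^\Psi$ maps suitable elements of $B\rtimes\Gamma$ into $B^\Psi$ does not let you write $\mathfrak{E}^\Psi$ of a product $(b\lambda(y))(\lambda(y')c)$ as a product of an element of $B^\Psi$ with an element of $C^\Psi$, again because $\mathfrak{E}^\Psi$ is not multiplicative. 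The missing ingredient is Lemma 2.6 of \cite{Kasp}, which is in effect the statement that a set of elements satisfying the Landstad conditions whose $\C^*(\Gamma)$-bimodule span equals $A\rtimes\Gamma$ has closed linear span equal to the Landstad algebra $A^\Psi$. The paper's entire proof consists of citing that lemma and verifying its hypothesis by the one-line computation above; without that lemma, or an independent proof of it, your reverse inclusion remains open.
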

\begin{proof}
The proof of this lemma is an application of Lemma 2.6 of paper \cite{Kasp}. The usage of this lemma will be legitimate if  the  equality $[\C^*(\Gamma)B^\Psi C^\Psi\C^*(\Gamma)]=A\rtimes\Gamma$ holds, which we check below
\[[\C^*(\Gamma)B^\Psi C^\Psi\C^*(\Gamma)]=[\C^*(\Gamma)BC\C^*(\Gamma)]=[\C^*(\Gamma)A\C^*(\Gamma)]=A\rtimes\Gamma.\]
\end{proof}
\end{subsection}
\begin{subsection}{Product of $(p,q)$-commuting pairs}\label{pqprod} Let $(p,q)$ be a pair of positive numbers and $H$ be a Hilbert space.
The notion of a $(p,q)$-commuting pair of normal operators $R,S$, acting on $H$ was introduced in \cite{W3}. The fact that the product $R\circ S$ is a densely defined, closable operator acting on $H$ follows from Proposition 0.1, \cite{W3}.  

Let $A$ be a $\C^*$-algebra. The notion  of a $(p,q)$-commuting pair $(R,S)$ of elements affiliated with $A$ was introduced in Definition 5.7, \cite{Kasp}. In what follows we shall analyze $R\circ S$, showing that it is a densely defined operator acting on the Banach space $A$ and that its closure $RS$ is affiliated with $A$. The proof for $p=1$ with an additional spectral condition imposed on $R$ and $S$ was given in Lemma 2.15, \cite{PuszWor}.

The $z$-transform of an element $T$ affiliated with $A$ will be denoted by $z(T)$:
\[z(T)=T(1+T^*T)^{\frac{1}{2}}\in \M(A).\] For any $s\in\mathbb{C}$, the $z$-transform of $sT\in A^\eta$ will be denoted by  $z_s(T)$. For  notational convenience, we shall define a $(p^2,q^2)$-commuting pair of normal elements. 
\begin{defin} \label{mu2} Let $A$ be a $\C^*$-algebra and let $(R,S)$
be a pair of normal elements affiliated with $A$. We say that $(R,S)$
is a $(p^2,q^2)$-commuting pair if
\begin{itemize}
\item[1.]$z(R)z(S^*)=z_{pq}(S^*)z_{q/p}(R)$
\item[2.]$z_{q/p}(R)z(S)=z_{pq}(S)z(R)$.
\end{itemize}
The set of all $(p^2,q^2)$-commuting pairs of normal elements affiliated with a $\C^*$-algebra $A$ is denoted by $D_{p^2,q^2}(A)$.
\end{defin}
\begin{twr}\label{prodpq} Let $A$ be a $\C^*$-algebra and let $(R,S)$ be a $(p^2,q^2)$-commuting pair of elements affiliated with $A$. Then the operator $R\circ S:\mathcal{D}(R\circ S)\rightarrow A$  is closable and its closure $RS$ is affiliated with $A$. 
\end{twr}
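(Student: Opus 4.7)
My plan is to follow the Woronowicz framework for affiliated elements, where $T\aff A$ is characterized by $z(T)\in\M(A)$ together with the density of $(1-z(T)^*z(T))^{1/2}A$ in $A$. Thus the proof splits into: (a) producing a candidate $z(RS)\in\M(A)$, (b) checking the two affiliation conditions for it, and (c) identifying the corresponding affiliated operator with the closure of $R\circ S$. This generalizes Lemma~2.15 of \cite{PuszWor}, which handled the case $p=1$ under an extra spectral positivity assumption.

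First I would introduce bounded cut-offs $R_n=\phi_n(R)$ and $S_n=\phi_n(S)\in\M(A)$ via functional calculus of normal affiliated elements, with $\phi_n$ a smooth truncation of the identity at radius $n$. Because the defining relations of Definition \ref{mu2} are relations between $z$-transforms, one can check that each pair $(R_n,S_n)$ inherits bounded versions of the same commutation relations, so the products $R_nS_n\in\M(A)$ are unambiguous and their $z$-transforms $z(R_nS_n)$ can be manipulated algebraically. The two relations in Definition~\ref{mu2} are needed in tandem here: the first lets one move $z(S^*)$ past $z(R)$ (at the cost of rescaling by $pq$ and $q/p$), while the second controls $z(R)z(S)$, and together they produce an expression for $z(R_nS_n)$ in which the dependence on $n$ is only through a bounded commuting factor that stabilizes strictly as $n\to\infty$. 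This yields a candidate $Z\in\M(A)$ for $z(RS)$.

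Verifying that $Z=z(T)$ for some $T\aff A$ amounts to checking $Z^*Z\le 1$ and the density of $(1-Z^*Z)^{1/2}A$. Both are derived from the corresponding properties of the approximants $z(R_nS_n)$ and the strict convergence; the density in the limit uses that $\D(R)\cap\D(S^*R^*)\cap\cdots$ can be approached by spectral cut-offs. Given $T$, the identification $T=\overline{R\circ S}$ is obtained by showing that on the dense subspace $\bigcup_n \phi_n(R)\phi_n(S)A$ the two operators agree (a direct spectral computation using the bounded cut-offs), and then invoking closedness of $T$ to conclude it extends the closure of $R\circ S$. Closability of $R\circ S$ is seen separately: the formal adjoint $S^*\circ R^*$ falls into the same class $D_{p^2,q^2}(A)$ by taking adjoints in Definition~\ref{mu2}, hence is densely defined.

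The step I expect to be the main obstacle is establishing strict convergence of $z(R_nS_n)$ and the resulting affiliation conditions for $Z$. In the special case of \cite{PuszWor} ($p=1$ with the extra spectral hypothesis) one of the two relations is enough and positivity allows a direct monotone argument; for general $(p,q)$ both relations of Definition~\ref{mu2} must be used simultaneously, and the rescaling parameters $pq$ and $q/p$ have to be threaded carefully through the limit so that the resulting expression lands in $\M(A)$ rather than only in a larger multiplier algebra. Once this technical point is settled, the identification with $\overline{R\circ S}$ and hence $RS\aff A$ follows from the characterization recalled at the start.
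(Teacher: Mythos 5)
Your strategy---truncate $R$ and $S$ by functional calculus, compute with bounded elements, and pass to a strict limit---is genuinely different from the paper's, but it has gaps at exactly the points that carry the weight of the theorem. First, the claim that the cut-offs $R_n=\phi_n(R)$, $S_n=\phi_n(S)$ ``inherit bounded versions of the same commutation relations'' is not correct as stated: the relations of Definition \ref{mu2} are Weyl-type relations, and for such pairs the functional calculus of $R$ does not pass $S$ unchanged but gets rescaled (heuristically $S^{-1}RS=p^2R$, so $f(R)\,S=S\,f(p^2R)$). Hence $\phi_n(R)$ and $\phi_n(S)$ satisfy relations with \emph{shifted} cut-off functions, the products $R_nS_n$ are in general not normal, and the algebraic manipulation of $z(R_nS_n)$ you intend to perform is not available in the form you describe. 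Second, the two statements you yourself flag as ``the main obstacle''---strict convergence of $z(R_nS_n)$ to some $Z\in\M(A)$ and the density of $(1-Z^*Z)^{1/2}A$ in $A$---are precisely the content of the theorem, and neither follows ``from the corresponding properties of the approximants'': density of the range of $(1-z(T_n)^*z(T_n))^{1/2}$ is not preserved under strict limits, so an approximation argument cannot settle it without a closed-form identity in hand. Third, your closability argument is circular: you justify dense-definedness of the formal adjoint $S^*\circ R^*$ by saying it ``falls into the same class, hence is densely defined,'' but dense-definedness of the composition of a pair in $D_{p^2,q^2}(A)$ is part of what is being proved; it is not even clear a priori that $\mathcal{D}(R\circ S)$ is nonzero.

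For comparison, the paper avoids all limits. Following Napi\'orkowski--Woronowicz, it writes down directly the candidate graph matrix
\[Q=\begin{pmatrix}\bigl(1-z_{p/q}(R)^*z_{p/q}(R)\bigr)^{\frac{1}{2}}(1-z(S)^*z(S))^{\frac{1}{2}}&-z(S)^*z(R)^*\\ z(R)z(S)&(1-z(R)^*z(R))^{\frac{1}{2}}(1-z_{pq}(S)^*z_{pq}(S))^{\frac{1}{2}}\end{pmatrix}\]
and uses both relations of Definition \ref{mu2} to show that $QQ^*$ is diagonal with
\[(QQ^*)_{11}=\frac{1+|R|^2|S|^2}{(1+|R|^2)(1+|S|^2)},\]
whose range is manifestly dense; Theorem 2.3 of \cite{W10} then produces an affiliated element $T$ whose explicit core $\bigl(1-z_{p/q}(R)^*z_{p/q}(R)\bigr)^{\frac{1}{2}}(1-z(S)^*z(S))^{\frac{1}{2}}A$ is contained in $\mathcal{D}(R\circ S)$. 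This gives in one stroke dense-definedness of $R\circ S$, its closability (it is a restriction of the closed operator $T$), and the identification of its closure with $T$. The displayed identity is exactly the ingredient your strict-limit step would have to reproduce; without it, or a substitute for it, the proposal does not close.
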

\begin{proof} The proof of our theorem is based on Theorem 2.3 of \cite{W10}, which describes a correspondence between elements affiliated with $A$ and a subset of $2\times 2$ matrices of elements of $\M(A)$. Let $Q\in\M_2(\mathbb{C})\otimes\M(A)$:
\[Q=\left(\begin{array}{cc}d&-c^*\\
														b&a^*\end{array}\right).\] The affiliated element $T\,\eta A$ is related with $Q$ by the following correspondence.
The first column of $Q$ provides   information about  $T\in A^\eta$, in the sense that $dA$ is a core of $T$ and $Tdx=bx$ for any $x\in A$. The second column of $Q$ provides information about $T^*\in A^\eta$ in the sense that, $a^*A$ is a core of $T^*$ and $T^*a^*x=c^*x$. The consistency condition $ab=cd$ reflects the defining property of the $*$-operation. In order for $T$ and $T^*$ to be densely defined it is necessary that $dA$ and $a^*A$ are dense in $A$. Finally, to ensure that such a matrix does in fact define an affiliated element $T$, one needs to check that the image of $Q$ as a map acting on the Hilbert module $A_2=A\oplus A$ is dense in $A_2$: $\overline{QA_2}^{||\cdot||}=A_2$. 

Let us move on to the main part of the proof, which was inspired by the proof of Theorem 6.1 of \cite{WorNap}.
Let $(R,S)$ be a  $(p^2,q^2)$ commuting pair of elements affiliated with $A$. The matrix $Q$, which will let us define $RS$, has the form:
\[Q=\begin{pmatrix}
  \left(1-z_{p/q}(R)^*z_{p/q}(R)\right)^{\frac{1}{2}}(1-z(S)^*z(S))^{\frac{1}{2}} & -z(S)^*z(R)^* \\
  z(R)z(S) & (1-z(R)^*z(R))^{\frac{1}{2}}(1-z_{pq}(S)^*z_{pq}(S))^{\frac{1}{2}} 
\end{pmatrix}.\]
The only nontrivial condition to check is that $QA_2$ is dense in $A_2$. In order to do that 
let us consider the matrix $QQ^*\in M_2(\mathbb{C})\otimes\M(A)$. We start with a computation of the left upper corner of $QQ^*$
\begin{align*}(QQ^*)_{11}&=\left(1-z_{p/q}(R)^*z_{p/q}(R)\right)(1-z(S)^*z(S))+z(S)^*z(R)^*z(R)z(S)\\&=\left(1-z_{p/q}(R)^*z_{p/q}(R)\right)(1-z(S)^*z(S))+z_{p/q}(R)^*z_{p/q}(R)z(S)^*z(S)
\end{align*}
where in the last equality we used Definition \ref{mu2} to commute $z(S)^*$ with $z(R)^*z(R)$.
Similarly, we compute the right bottom corner of $QQ^*$:
\[(QQ^*)_{22}=(1-z(R)^*z(R))(1-z_{pq}(S)^*z_{pq}(S))+z(R)^*z(R)z_{pq}(S)^*z_{pq}(S).\]
The matrix elements $(QQ^*)_{12}=(QQ^*)^*_{21}$ turn out to be $0$:
\begin{align*}
(QQ^*)_{12}&=\left(1-z_{p/q}(R)^*z_{p/q}(R)\right)^{\frac{1}{2}}(1-z(S)^*z(S))^{\frac{1}{2}}z(S)^*z(R)^*\\&-z(S)^*z(R)^*(1-z(R)z(R)^*)^{\frac{1}{2}}(1-z_{pq}(S)z_{pq}(S)^*)^{\frac{1}{2}}=0.
\end{align*} To show that the above difference is zero we used the following two identities:
\begin{align*}
(1-z(S)^*z(S))^{\frac{1}{2}}z(R)^*&=z(R)^*(1-z_{pq}(S)z_{pq}(S)^*)^{\frac{1}{2}}\\
z(S)^*(1-z(R)z(R)^*)^{\frac{1}{2}}&=z(S)^*\left(1-z_{p/q}(R)^*z_{p/q}(R)\right)^{\frac{1}{2}},
\end{align*} which immediately follows from Definition \ref{mu2}.

Let us note that \[ (QQ^*)_{11}=\frac{1+|R|^2|S|^2}{(1+|R|^2)(1+|S|^2)}.\] In particular the right ideal generated by $(QQ^*)_{11}$ is dense in $A$: $\overline{(QQ^*)_{11}A}^{||\cdot||}=A$. The same concerns $(QQ^*)_{22}$. From the fact that $QQ^*$ is a diagonal matrix we can see that $\overline{QQ^*A_2}^{||\cdot||}=A_2$, which implies that $QA_2$ is a dense subset of $A_2$. This in turns shows that $Q$ satisfies the assumptions  of Theorem 2.3 of paper \cite{W10}. We conclude that $Q$ defines an affiliated element $T\,\eta A$ such that the set
\begin{equation}\label{coret}\left(1-z_{p/q}(R)^*z_{p/q}(R)\right)^{\frac{1}{2}}(1-z(S)^*z(S))^{\frac{1}{2}}A\end{equation} is a core of $T$ and \[T\left(1-z_{p/q}(R)^*z_{p/q}(R)\right)^{\frac{1}{2}}(1-z(S)^*z(S))^{\frac{1}{2}}a=z(R)z(S)a.\]
Finally, using point 2 of Theorem 2.3 of \cite{W10} one can check that the set 
\[\mathcal{D}(R\circ S)=\{a\in\mathcal{D}(S): Sa\in\mathcal{D}(R)\}\] is a subset of $\mathcal{D}(T)$ and $T|_{\mathcal{D}(R\circ S)}=R\circ S$, hence we may conclude that $R\circ S$ is a closeable operator acting on $A$ and $(R\circ S)^{\mbox{cl}}\subset T$. In order to get the opposite inclusion it is enough to note that the core \eqref{coret} of $T$ is contained in $\mathcal{D}(R\circ S)$.
\end{proof}
\begin{twr}\label{twrs} Let $(R,S)$ be a $(p^2,q^2)$-commuting pair of normal elements affiliated with a $\C^*$-algebra $A$. Then 
\begin{align*} RS&=p^2SR\\
RS^*&=q^2S^*R
\end{align*}
\end{twr}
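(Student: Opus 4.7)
My plan is to derive both unbounded operator identities from the $z$-transform relations in Definition \ref{mu2}, working on a common dense core. Both identities will be established in parallel, exploiting the matrix/core characterization of affiliated elements (Theorem 2.3 of \cite{W10}) that underlies the proof of Theorem \ref{prodpq}.

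The first step is to observe that $(S, R)$ is itself a commuting pair, namely $((1/p)^2, q^2)$-commuting in the sense of Definition \ref{mu2}: the identity in point 2 of Definition \ref{mu2} for $(R, S)$ is identical, after the parameter substitution $p' = 1/p$, $q' = q$, to the identity in point 2 for $(S, R)$; and the identity in point 1 for $(S, R)$ follows from taking the adjoint of point 1 for $(R, S)$, using that $z(T^*) = z(T)^*$ for a normal element $T$. Theorem \ref{prodpq} then produces $SR \in A^\eta$ with matrix data $d_{SR} = (1+(pq)^{-2}|S|^2)^{-1/2}(1+|R|^2)^{-1/2}$ at position $(1,1)$ and $z(S)z(R)$ at position $(2,1)$, in parallel with the corresponding data $d_{RS} = (1+(p/q)^2|R|^2)^{-1/2}(1+|S|^2)^{-1/2}$ and $z(R)z(S)$ for $RS$. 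The analogous reasoning, applied with $S$ replaced by $S^*$, produces $S^* R$ and $R S^*$ as affiliated elements of $A$.

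I would then prove $RS = p^2 SR$ by verifying that the two operators agree on the subspace $d_{RS} d_{SR} A$, which is dense in $A$ and sits inside the core of both $RS$ and $SR$. On this subspace, the claimed equality reduces to the $\M(A)$-identity $z(R)z(S) d_{SR} = p^2 z(S)z(R) d_{RS}$, which in turn is obtained from point 2 of Definition \ref{mu2} in the form $z_{q/p}(R)z(S) = z_{pq}(S)z(R)$ by rearranging the various functional-calculus factors $(1+\alpha|R|^2)^{\pm 1/2}$ and $(1+\beta|S|^2)^{\pm 1/2}$; the scaling factor $p^2$ enters as the quotient $pq/(q/p)$ of the two rescalings present in point 2. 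The identity $RS^* = q^2 S^* R$ follows by the same scheme with $S$ replaced by $S^*$, using point 1 of Definition \ref{mu2} in place of point 2, and with $q^2$ arising as the product $pq \cdot q/p$ of the rescalings in point 1.

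The main obstacle is the rearrangement of the bounded functional-calculus factors in $|R|^2$ and $|S|^2$, which hinges on these operators commuting in $\M(A)$. This commutativity is itself a nontrivial consequence of points 1 and 2 of Definition \ref{mu2} taken together (neither condition alone suffices), and establishing it is the analogue, in the present setting, of the verification $(QQ^*)_{12} = 0$ carried out in the proof of Theorem \ref{prodpq}.
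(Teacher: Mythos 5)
Your proposal follows, in substance, the same route as the paper's proof: both rest on the core description of the product supplied by Theorem \ref{prodpq} (applied also to the reversed pair, which you correctly identify as $((1/p)^2,q^2)$-commuting --- a point the paper leaves implicit), both verify the identity on a common dense set of the form (bounded functional-calculus factor)$\cdot A$, and both extract the constant $p^2$ from the mismatch of the two rescalings appearing in point 2 of Definition \ref{mu2}. Your remark that the commutation of bounded functions of $|R|^2$ with bounded functions of $|S|^2$ needs points 1 and 2 of Definition \ref{mu2} together is also correct, and it is indeed the analogue of the $(QQ^*)_{12}=0$ computation.

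There is, however, one step whose justification fails as stated, and the missing ingredient is exactly what the paper's opening observation provides. You conclude $RS=p^2SR$ from the agreement of the two operators on $d_{RS}d_{SR}A$, on the grounds that this subspace is dense in $A$ and contained in the cores of both. That inference is not valid in general: two distinct elements affiliated with $A$ can agree on a dense common subspace of their domains, since an affiliated element may admit proper affiliated extensions (for $A=\mathbb{K}(H)$ every closed densely defined operator on $H$ is affiliated with $A$, so a closed symmetric operator and a self-adjoint extension of it give such a pair). Agreement on a subspace that is a core of $RS$ yields only the inclusion $RS\subset p^2SR$; to get equality one must know that $d_{RS}d_{SR}A$ is a \emph{joint core}, i.e.\ a core for $RS$ and for $SR$ simultaneously. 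This is true and easy to supply: since $d_{RS}$ and $z(R)z(S)$ are elements of $\M(A)$ and $d_{SR}A$ is dense in $A$, every graph element $\left(d_{RS}a,\,z(R)z(S)a\right)$ of $RS$ restricted to its core $d_{RS}A$ is a graph-norm limit of elements $\left(d_{RS}d_{SR}b_n,\,z(R)z(S)d_{SR}b_n\right)$, and symmetrically for $SR$. The paper sidesteps the issue differently: it observes that $(1-z_t(X)^*z_t(X))^{\frac{1}{2}}A$ does not depend on $t>0$, so that the cores of $RS$ and of $SR$ coincide, as sets, with $(1-z(R)^*z(R))^{\frac{1}{2}}(1-z(S)^*z(S))^{\frac{1}{2}}A$, and then computes both operators on this common core. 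With either repair your argument is complete; without it, the final step is a genuine gap.
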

\begin{proof} In order to prove the first relation let us observe that for any positive number $t>0$ and any affiliated element $X\,\eta A$  we have  \[(1-z_t(X)^*z_t(X))^{\frac{1}{2}}A=(1-z(X)^*z(X))^{\frac{1}{2}}A.\] This shows that the set \[(1-z(R)^*z(R))^{\frac{1}{2}}(1-z(S)^*z(S))^{\frac{1}{2}}A\] is a joint core for $RS$ and $SR$. Using the $(p^2,q^2)$-commutation relations for the pair $(R,S)$ we get 
\begin{align*}RS(1-z(R)^*z(R))^{\frac{1}{2}}(1-z(S)^*z(S))^{\frac{1}{2}}&=\frac{p}{q}z_{q/p}(R)z(S)\\
SR(1-z(R)^*z(R))^{\frac{1}{2}}(1-z(S)^*z(S))^{\frac{1}{2}}&=\frac{1}{pq}z_{pq}(S)z(R)
\end{align*}
From Definition \ref{mu2} we see that the right hand sides of the above equations are equal up to a multiplicative constant $p^2$. This ends the proof of the equality $RS=p^2SR$. The second equality $RS^*=q^2S^*R$ can be proved in a similar way.
\end{proof}
\end{subsection}\end{section}
\begin{section}{Rieffel deformation of group coactions}\label{rdc}
In this section we shall describe the Rieffel deformation of continuous group coactions. We adopt the following definition.
\begin{defin} Let $(A,\Delta)$ be a locally compact quantum group and let $B$ be a $\C^*$-algebra. A morphism $\Delta_B\in\Mor(B,B\otimes A)$ is said to be a continuous right coaction of $(A,\Delta)$ on $B$ if
\begin{itemize}
\item[(1)] $(\Delta_B\otimes\iota)\Delta_B=(\iota\otimes\Delta)\Delta_B$;
\item[(2)] $[(1\otimes A)\Delta_B(B)]=B\otimes A$.
\end{itemize}
\end{defin}
There is a one to one correspondence between the continuous coaction  $\Delta_B\in\Mor(B,B\otimes\C_0(G))$ of $(\C_0(G),\Delta)$ and the  continuous action of $G$ on $B$. The action $\beta:G\rightarrow\Aut(B)$ that corresponds to $\Delta_B$ is defined as follows. Let $\chi_g\in\Mor(\C_0(G),\mathbb{C})$ be the character associated with a group element $g\in G$. We define $\beta_g\in\Aut(B)$ by the following formula: $\beta_g(b)=(\iota\otimes\chi_g)\Delta_B(b)$. It is easy to check that this indeed defines a continuous action of $G$ on $B$.
 
In order to perform the Rieffel deformation of $\Delta_B$ let us assume that $G$ contains an abelian subgroup $\Gamma\subset G$ and let $\Psi$ be a continuous $2$-cocycle on the dual group $\HG$. Restricting $\beta:G\rightarrow\Aut(B)$ to the subgroup $\Gamma\subset G$ we get an action of $\Gamma$ on $B$, which shall be denoted by $\alpha:\Gamma\rightarrow\Aut(B)$.  Let $\mu$ and $\nu$ be the actions of $\Gamma$ on $\C_0(G)$ given by the left and the right shifts along $\Gamma$:\[\mu_{\gamma}(f)(g)=f(\gamma^{-1}g),\,\,(\nu_\gamma f)(g)=f(g\gamma) \] for and $g\in G$, $\gamma\in\Gamma$ and $f\in\C_0(G)$. Using the deformation data $(B,\alpha,\Psi)$ we may construct the $\C^*$-algebra $B^\Psi$ and using $(\C_0(G),\mu\times\nu,\tilde\Psi\otimes\Psi)$ (for the notation  $\mu\times\nu$ see Lemma \ref{comact}) we may construct the quantum group $\mathbb{G}^\Psi=(\C_0(G)^{\tilde\Psi\otimes\Psi},\Delta^\Psi)$.  

Let us move on to the construction of the coaction $\Delta_B^\Psi$ of $\mathbb{G}^\Psi$ on $B^\Psi$. In order to do that  we  define an auxiliary $\C^*$-algebra: \[D=[\Delta_B(B)(1\otimes\C_0(\Gamma\backslash G)]\subset\M(B\otimes\C_0(G)),\] where we treat  $\C_0(\Gamma\backslash G)$ as a subalgebra of $\M(\C_0(G))$. The embedding $D\subset\M(B\otimes\C_0(G))$ is non-degenerate: $[D(B\otimes\C_0(G))]=B\otimes\C_0(G)$. For the ease of reference we describe other properties of $D$ in the following lemma.
\begin{lem}\label{yprop}
Let $D$ be the $\C^*$-algebra defined above. Then:
\begin{itemize}
\item[(1)] the $\Gamma^3$-action $\alpha\otimes(\mu\times\nu)$ on $\M(B\otimes\C_0(G))$ restricts to  a continuous action on $D$;
\item[(2)] the image of the coaction $\Delta_B\in\Mor(B,B\otimes\C_0(G))$  is contained in $\M(D)$ and  $\Delta_B\in\Mor(B,D)$; 
\item[(3)] the embedding of $\C_0(\Gamma\backslash G)\subset\M(D)$ given by \[\C_0(\Gamma\backslash G)\ni f\mapsto(1\otimes f)\in\M(D)\] is a Woronowicz morphism.
\end{itemize}
\end{lem}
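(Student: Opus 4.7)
The engine of the proof is the pair of covariance identities
\[
(\iota\otimes\nu_\gamma)\Delta_B(b)=\Delta_B(\alpha_\gamma(b)),\qquad
(\alpha_\gamma\otimes\mu_\gamma)\Delta_B(b)=\Delta_B(b),
\]
for $\gamma\in\Gamma$ and $b\in B$. Both are obtained from the coaction axiom $(\iota\otimes\Delta)\Delta_B=(\Delta_B\otimes\iota)\Delta_B$ by evaluating $\Delta$ against the character $\chi_\gamma$ on the rightmost or middle leg, together with the elementary identifications $(\iota\otimes\chi_\gamma)\Delta=\nu_\gamma$ and $(\chi_\gamma\otimes\iota)\Delta=\mu_{\gamma^{-1}}$.

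Parts (2) and (3) are quick consequences of the centrality of $1\otimes\C_0(G)$ in $\M(B\otimes\C_0(G))$. For (2), combined with $\Delta_B$ being a $*$-homomorphism this gives $\Delta_B(b)\cdot\Delta_B(b')(1\otimes f)=\Delta_B(bb')(1\otimes f)\in D$ and $\Delta_B(b')(1\otimes f)\cdot\Delta_B(b)=\Delta_B(b'b)(1\otimes f)\in D$, so $\Delta_B(b)\in\M(D)$; nondegeneracy $[\Delta_B(B)D]=D$ then follows from $[\Delta_B(B)\Delta_B(B)]=\Delta_B(B)$. For (3), the same centrality gives
\[
(1\otimes f_1)\Delta_B(b)(1\otimes f_2)=\Delta_B(b)(1\otimes f_1f_2),
\]
which shows $1\otimes\C_0(\Gamma\backslash G)\subset\M(D)$ and reduces the density condition $[(1\otimes\C_0(\Gamma\backslash G))D]=D$ to $[\C_0(\Gamma\backslash G)\cdot\C_0(\Gamma\backslash G)]=\C_0(\Gamma\backslash G)$.

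Part (1) is the delicate one. On a generating element $d=\Delta_B(b)(1\otimes f)$, applying the covariance identities successively and exploiting $\mu_{\gamma_1}f=f$ gives
\[
(\alpha_\gamma\otimes\mu_{\gamma_1}\nu_{\gamma_2})d=(\alpha_{\gamma\gamma_1^{-1}}\otimes\iota)\bigl[\Delta_B(\alpha_{\gamma_2}(b))(1\otimes\nu_{\gamma_2}f)\bigr].
\]
The bracketed factor manifestly lies in $D$, so invariance reduces to showing $(\alpha_{\gamma'}\otimes\iota)(D)\subset D$ for every $\gamma'\in\Gamma$; this is the main obstacle, since $(\alpha_{\gamma'}\otimes\iota)\Delta_B(b)$ does not itself belong to $\Delta_B(B)$. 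My plan is to identify $D$ with the induced space
\[
\bigl\{F\in C_b(G,B):F(\gamma g)=\alpha_\gamma(F(g))\ \forall\gamma\in\Gamma,\ \|F(\cdot)\|\in\C_0(\Gamma\backslash G)\bigr\},
\]
via a partition of unity on $\Gamma\backslash G$ and local sections of the principal $\Gamma$-bundle $G\to\Gamma\backslash G$; the inclusion $\subseteq$ is direct from the form of the generators, while $\supseteq$ is the partition-of-unity approximation. Once this description is in place, invariance of $D$ under each of the three one-parameter factors of the $\Gamma^3$-action becomes a routine check that the defining equivariance and $\C_0$-vanishing conditions are preserved by the relevant shifts, and continuity of the restricted action is inherited, via the spanning set, from the continuity of $\beta|_\Gamma$ on $B$ and of $\mu,\nu$ on $\C_0(G)$.
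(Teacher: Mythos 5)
Your handling of parts (2) and (3) via the centrality of $1\otimes\C_0(G)$ is correct and complete, as are your two covariance identities and your reduction of part (1) to the single claim $(\alpha_{\gamma'}\otimes\iota)(D)\subset D$; you have correctly isolated the crux, since $(\alpha_{\gamma'}\otimes\iota)$ does not preserve the generating set $\Delta_B(B)(1\otimes\C_0(\Gamma\backslash G))$, only (at best) its closed span. The genuine gap is in how you propose to identify $D$ with the induced algebra: the quotient map $G\rightarrow\Gamma\backslash G$ is not in general a locally trivial principal bundle, and continuous local sections can fail to exist for a closed abelian subgroup of a locally compact group. For instance, with $G=\prod_{n\in\mathbb{N}}SU(2)$ and $\Gamma=\prod_{n\in\mathbb{N}}\{\pm 1\}$, a local section of $G\rightarrow\Gamma\backslash G=\prod_{n}SO(3)$ would yield a global continuous section of $SU(2)\rightarrow SO(3)$, which would disconnect $S^3$. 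So your inclusion of the induced algebra into $D$ (the partition-of-unity step) breaks down in exactly the generality in which the lemma is stated.

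The missing idea --- and this is what the paper's proof runs on --- is that $\alpha$ is the restriction of the globally defined $G$-action $\beta$, and $\beta$ trivializes the induced algebra with no bundle hypotheses at all: the map $\Phi(b\otimes f)=\Delta_B(b)(1\otimes f)$ is an isomorphism of $B\otimes\C_0(\Gamma\backslash G)\cong\C_0(\Gamma\backslash G,B)$ onto $D$, with inverse $F\mapsto\bigl(\Gamma g\mapsto\beta_{g}^{-1}(F(g))\bigr)$, well defined precisely because of the equivariance $F(\gamma g)=\alpha_\gamma(F(g))$ that you wrote down. This gives your description of $D$ in one line, and conjugating the $\Gamma^3$-action by $\Phi$ turns it into the explicit action $h\mapsto\bigl(\Gamma g\mapsto\beta_{g^{-1}\gamma_1\gamma_2^{-1}g\gamma_3}\bigl(h(\Gamma g\gamma_3)\bigr)\bigr)$ on $\C_0(\Gamma\backslash G,B)$, whose point-norm continuity settles both assertions of part (1) simultaneously. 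Two further remarks. First, the fiberwise conjugation $g^{-1}\gamma_1\gamma_2^{-1}g$ (well defined on $\Gamma\backslash G$ because $\Gamma$ is abelian) cannot be dropped unless $\Gamma$ is central in $G$; the paper's printed formula \eqref{Phisom} elides it, so your suspicion that this step is the main obstacle is well founded --- it is exactly what \eqref{Phisom} glosses over, and it matters for the paper's own application, where $\Gamma$ is the non-central diagonal subgroup of $SL(2,\mathbb{C})$. Second, your closing continuity claim is too quick as stated: $D$ lies in $\M(B\otimes\C_0(G))$, and the orbit $\{\beta_g(b):g\in G\}$ is bounded but not precompact, so continuity on the spanning set does not follow formally from continuity of $\beta|_\Gamma$, $\mu$ and $\nu$; one needs the cutoff by $f\in\C_0(\Gamma\backslash G)$, the fact that compact subsets of $\Gamma\backslash G$ lift to compact subsets of $G$, and once more the commutativity of $\Gamma$, to get the required uniform estimates (or, more cleanly, the product picture above, where continuity is manifest).
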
 
\begin{proof} The only  point which is not obvious is the first one. To prove it let us note that $D$ is isomorphic with $B\otimes\C_0(\Gamma\backslash G)$, where the isomorphism $\Phi\in\Mor(B\otimes\C_0(\Gamma\backslash G),D)$ is given by $\Phi(b\otimes f)=\Delta_B(b)(1\otimes f)\in D$ for any $b\in B$ and $f\in\C_0(\Gamma\backslash G)$. Furthermore, it may be checked that \begin{equation}\label{Phisom}(\alpha_{\gamma_1}\otimes(\mu_{\gamma_2}\times\nu_{\gamma_3}))\Phi=\Phi(\alpha_{\gamma_1\gamma_2^{-1}}\otimes\nu_{\gamma_3})\end{equation}  for any $\gamma_1,\gamma_2,\gamma_3\in\Gamma$. This shows that the $\Gamma^3$-action $\alpha\otimes(\mu\times\nu)$ on $\M(B\otimes\C_0(G))$ restricts to a continuous action on $D\subset\M(B\otimes\C_0(G))$.  \end{proof}
Let us move on to the construction of $\Delta_B^\Psi\in\Mor(B^\Psi,B^\Psi\otimes\C_0(G)^{\tilde{\Psi}\otimes\Psi})$. By  Theorem 3.8 the intertwining properties of $\Delta_B$ 
\[\Delta_B(\alpha_\gamma(b))=(\id\otimes\nu_{\gamma})\Delta_B(b)\] enable us  to define   $\Delta_B^\Gamma\in\Mor(B^\Psi,D^\Psi)$ (we wish to keep the symbol $\Delta_B^\Psi$ for a morphism which will be defined later). Consider the $\Gamma^2$-action $\alpha\otimes\mu$ on $D$. It has the following properties:
\begin{itemize}
\item[(1)] $\alpha_\gamma\otimes\mu_\gamma=\id$,
\item [(2)] $\id\otimes\nu$ and $\alpha\otimes\mu$ mutually commute.
\end{itemize}
The first equality follows from \eqref{Phisom}, whereas the second one is obvious. 
Using Lemma \ref{comact} we see that the Rieffel deformation $D^{\Psi\otimes\tilde{\Psi}\otimes\Psi}$ obtained by the $\Gamma^3$-action introduced in  point 1 of Proposition \ref{yprop}, is isomorphic with $(D^{\Psi})^{\tilde{\Psi}\otimes\Psi}$. This in turn, by Lemma \ref{iso} and Remark \ref{uwid} is isomorphic with $D^{\Psi}$. Therefore, composing the morphism  $\Delta_B^\Gamma\in\Mor(B^\Psi,D^\Psi)$ with the isomorphism $D^\Psi\cong D^{\Psi\otimes\tilde{\Psi}\otimes\Psi}$ of Lemma \ref{iso} we may define the morphism:
\begin{equation}\label{uptwist}\Delta_B^\Psi\in\Mor(B^\Psi,D^{\Psi\otimes\tilde{\Psi}\otimes\Psi}):\,\,\,\Delta_B^\Psi(a)=\Upsilon\Delta_B^\Gamma(a)\Upsilon^*.\end{equation}
Finally, the embedding $\iota\in\Mor(D,B\otimes\C_0(G))$ is $\Gamma^3$-covariant (see Lemma \ref{yprop}), which by Remark \ref{cov} of \cite{Kasp} gives a  morphism $\iota^{\Psi\otimes\tilde{\Psi}\otimes\Psi}$ that embeds  $D^{\Psi\otimes\tilde{\Psi}\otimes\Psi}$ into $\M(B^\Psi\otimes\C_0(G)^{\tilde{\Psi}\otimes\Psi})$. Composing $\Delta_B^\Psi$ with this morphism we define $\Delta_B^\Psi\in\Mor(B^\Psi,B^\Psi\otimes\C_0(G)^{\tilde{\Psi}\otimes\Psi})$. 

Note that the construction of $\Delta_B^\Psi$ is obtained by the $\Upsilon$ twist of $\Delta_B^\Gamma$ (see Eq. \eqref{uptwist}). Furthermore, $\Delta_B^\Gamma$ extends naturally to the morphism $\Delta_B^\Gamma\in\Mor(B\rtimes\Gamma,B\otimes(\C_0(G)\rtimes\Gamma))$. Noting that $\Delta_B^\Gamma$ contains the information about $\Delta_B$ we see that the transition $\Delta_B\rightsquigarrow\Delta_B^\Psi$ is nontrivial: using $\Delta_B^\Psi$ and $\Psi$ we may recover $\Delta_B$. 

\begin{twr} The morphism $\Delta_B^\Psi\in\Mor(B^\Psi,B^\Psi\otimes\C_0(G)^{\tilde{\Psi}\otimes\Psi})$ defined above is a right coaction of $\mathbb{G}^\Psi$ on $B^\Psi$:
\begin{equation}\label{coactid}(\Delta_B^\Psi\otimes\id)\Delta_B^\Psi=(\id\otimes\Delta^\Psi)\Delta_B^\Psi.\end{equation}
\end{twr}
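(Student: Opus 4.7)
The plan is to lift the coaction identity from the undeformed setting to the crossed-product level via the functor $(\cdot)^\Gamma$ of Remark~\ref{cov}, and then to transport it through the $\Upsilon$-twist of \eqref{uptwist}. The starting identity $(\Delta_B\otimes\id)\Delta_B=(\id\otimes\Delta)\Delta_B$ holds by hypothesis on $\Delta_B$.

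Because $\Delta_B$ is $\Gamma$-equivariant with respect to $\alpha$ and $\id\otimes\nu$, and the coproduct $\Delta$ is equivariant for the product of left/right shifts on the source and target, functoriality of the crossed-product construction (Remark~\ref{cov}) produces morphisms $\Delta_B^\Gamma$ and $\Delta^\Gamma$ between the relevant crossed products which restrict to $\Delta_B$ and $\Delta$ on the coefficient algebras and act as identity on the $\C^*(\Gamma)$-factors. Applying functoriality once more to the coaction identity yields
\begin{equation}\label{planid}
(\Delta_B^\Gamma\otimes\id)\Delta_B^\Gamma=(\id\otimes\Delta^\Gamma)\Delta_B^\Gamma
\end{equation}
as an equality of morphisms into the multipliers of the appropriate threefold crossed product. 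This is the ``$\Gamma$-level'' coaction identity; no $2$-cocycle has entered yet.

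Conjugating \eqref{planid} with the relevant copies of $\Upsilon$ should produce \eqref{coactid}. Indeed, by \eqref{uptwist} the morphism $\Delta_B^\Psi$ is the $\Upsilon$-twist of $\Delta_B^\Gamma$ in the $\Gamma$-factor coming from $\alpha\times\nu$, and by the construction of $\mathbb{G}^\Psi$ recalled in the introduction the coproduct $\Delta^\Psi$ is the analogous $\Upsilon$-twist of $\Delta^\Gamma$ in the pair of $\Gamma$-factors coming from $\mu\times\nu$. Expanding $(\Delta_B^\Psi\otimes\id)\Delta_B^\Psi$ therefore gives an iterated $\Upsilon$-conjugation of $(\Delta_B^\Gamma\otimes\id)\Delta_B^\Gamma$ with twists placed at the first/second tensor slots (from the inner $\Delta_B^\Psi$) and at the first/third tensor slots (from the outer $\Delta_B^\Psi\otimes\id$); expanding $(\id\otimes\Delta^\Psi)\Delta_B^\Psi$ gives the analogous conjugation of $(\id\otimes\Delta^\Gamma)\Delta_B^\Gamma$ by twists at the first/second slots (from $\Delta_B^\Psi$) and at the second/third slots (from $\id\otimes\Delta^\Psi$). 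By \eqref{planid} the inner expressions coincide, so \eqref{coactid} reduces to the claim that the two resulting towers of $\Upsilon$'s agree in the triple multiplier algebra.

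The main obstacle is therefore this bookkeeping of twists. It is carried out in the same spirit as the proof of Lemma~\ref{iso}: to move a copy of $\Upsilon$ from an inner position past $\Delta_B^\Gamma\otimes\id$ to an outer position one uses the $\Gamma$-covariance of $\Delta_B^\Gamma$, producing a residual central factor analogous to the $Z$ of \eqref{Upact}. The $2$-cocycle identity \eqref{sumup} for $\Psi^\star$, together with the centrality of these residual factors, then makes the two outer towers coincide and yields \eqref{coactid}. Once the calculation is set up with the three $\Gamma$-axes labelled consistently, the verification is a direct, if somewhat tedious, cocycle computation.
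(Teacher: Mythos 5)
Your strategy is in fact the paper's strategy: pass to the crossed-product level, establish the untwisted identity $(\Delta_B^\Gamma\otimes\id)\Delta_B^\Gamma=(\id\otimes\Delta^\Gamma)\Delta_B^\Gamma$ (the paper gets this by checking it separately on $B$ and on $\C^*(\Gamma)$, using $B\rtimes\Gamma=[B\,\C^*(\Gamma)]$; your functoriality argument gives the same thing), and then track the $\Upsilon$-twists. The gap is in the last step, which you leave as a ``tedious cocycle computation'' and whose mechanics you describe incorrectly. Label the legs $1,2,3$ for $B\rtimes\Gamma$, the middle and the right copy of $\C_0(G)\rtimes\Gamma^2$. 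On the left-hand side the outer map $\Delta_B^\Psi\otimes\id$ contributes its twist at legs $(1,2)$, not $(1,3)$; and the inner twist does \emph{not} remain at $(1,2)$: since the outer morphism is applied to the whole of $\Delta_B^\Psi(b)=\Upsilon\Delta_B(b)\Upsilon^*$, the inner unitary must be transported by $(\Delta_B^\Gamma\otimes\id)$, and it ends up at legs $(2,3)$.

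The key point you miss is that this transport is \emph{exact}: no residual central factors appear and the cocycle identity \eqref{sumup} is never invoked in this proof (it is only needed for Lemma \ref{iso} itself). The twist of \eqref{uptwist} is $\lambda(\Psi^\star)$ evaluated on the pair of legs ($\alpha$-copy of $\C^*(\Gamma)$ in $B\rtimes\Gamma$, $\mu$-copy in $\C_0(G)\rtimes\Gamma^2$), while the twist implementing $\Delta^\Psi=\Ad_\Upsilon\circ\Delta^\Gamma$ is $\lambda(\Psi^\star)$ on the pair ($\nu$-copy of the first factor, $\mu$-copy of the second). Since $\Delta_B^\Gamma(\lambda_\gamma)=1\otimes\lambda_{e,\gamma}$ and $\Delta^\Gamma(\lambda_{\gamma_1,\gamma_2})=\lambda_{\gamma_1,e}\otimes\lambda_{e,\gamma_2}$, applying these homomorphisms to the multiplier $\lambda(\Psi^\star)$ gives the identities $(\Delta_B^\Gamma\otimes\id)(\Upsilon)=\Upsilon_{23}$ (precisely the comultiplication twist) and $(\id\otimes\Delta^\Gamma)(\Upsilon)=\Upsilon_{12}$; this is a direct evaluation, not a covariance manipulation producing $Z$-factors as in \eqref{Upact}. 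Consequently both sides of \eqref{coactid}, evaluated on $b\in B$, equal $\Upsilon_{12}\Upsilon_{23}\bigl((\Delta_B\otimes\id)\Delta_B(b)\bigr)\Upsilon_{23}^*\Upsilon_{12}^*$, where the only extra input is that $\Upsilon_{12}$ and $\Upsilon_{23}$ commute (both lie in the image of $\C^*(\Gamma^3)$ with $\Gamma$ abelian). So your plan is sound and essentially the paper's, but the decisive verification is simpler and of a different nature than the one you sketch; as written, your final paragraph does not constitute a proof of the claim that the two towers of twists agree.
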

\begin{proof}
It follows from the construction above that the morphism $\Delta_B^\Psi\in\Mor(B^\Psi,B^\Psi\otimes\C_0(G)^{\tilde{\Psi}\otimes\Psi})$ is a restriction of the crossed product morphisms $\Ad_{\Upsilon}\circ\Delta_B^\Gamma\in\Mor(B\rtimes\Gamma,B\rtimes\Gamma\otimes\C_0(G)\rtimes\Gamma^2)$. Similarly, the comultiplication $\Delta^\Psi$ is a restriction of the crossed product morphism $\Ad_{\Upsilon}\circ\Delta^\Gamma\in\Mor(\C_0(G)\rtimes\Gamma^2,\C_0(G)\rtimes\Gamma^2\otimes\C_0(G)\rtimes\Gamma^2)$ (see Theorem 3.8 \cite{Kasp}). For the purpose of this proof these crossed product morphisms will also be denoted by $\Delta_B^\Psi$ and $\Delta^\Psi$ respectively. 

We shall prove the  coaction identity \eqref{coactid} on the level of crossed products, which implies the same equality on the level of the deformed algebras. From the fact $B\rtimes\Gamma=[B\C^*(\Gamma)]$ we see that it is enough to check \eqref{coactid} on $B$ and $\C^*(\Gamma)$ separately. Let $\lambda_\gamma\in\C^*(\Gamma)$ be a unitary generator of $\C^*(\Gamma)$. In order to check that \eqref{coactid} holds on $\C^*(\Gamma)$ it is enough to note that
\begin{equation}\label{delcr}(\Delta_B^\Psi\otimes\id)\Delta_B^\Psi(\lambda_\gamma)=(1\otimes 1\otimes\lambda_{e,\gamma})=(\id\otimes\Delta^\Psi)\Delta_B^\Psi(\lambda_\gamma).\end{equation}
Now for any $b\in B\subset\M(B\rtimes\Gamma)$ we have 
\begin{align*}
(\Delta_B^\Psi\otimes\id)\Delta_B^\Psi(b)&=(\Delta_B^\Psi\otimes\id)(\Upsilon\Delta_B(b)\Upsilon^*)\\
&=\Upsilon_{12}\Upsilon_{23}((\Delta_B\otimes\id)\Delta_B(b))\Upsilon_{23}^*\Upsilon_{12}^*.
\end{align*}
On the other hand: 
\begin{align*}
(\id\otimes\Delta^\Psi)\Delta_B^\Psi(b)&=(\id\otimes\Delta^\Psi)(\Upsilon\Delta_B(b)\Upsilon^*)\\
&=\Upsilon_{12}\Upsilon_{23}((\id\otimes\Delta)\Delta_B(b))\Upsilon_{23}^*\Upsilon_{12}^*.
\end{align*}
Using the coaction equation for $\Delta_B$  we get 
$(\Delta_B^\Psi\otimes\id)\Delta_B^\Psi(b)=(\id\otimes\Delta^\Psi)\Delta_B^\Psi(b)$ for any $b\in B$. This together with \eqref{delcr} ends the proof.
\end{proof}
\begin{twr} The coaction $\Delta_B^\Psi$ of $\mathbb{G}^\Psi$ on $B^\Psi$ defined above is continuous.
\end{twr}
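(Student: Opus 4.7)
My plan is to deduce the Podle\'s-type density condition
\[
[(1\otimes\C_0(G)^{\tilde\Psi\otimes\Psi})\Delta_B^\Psi(B^\Psi)]=B^\Psi\otimes\C_0(G)^{\tilde\Psi\otimes\Psi}
\]
from the continuity of the undeformed coaction $\Delta_B$ via Lemma~\ref{prodalg}. The auxiliary algebra $D$ serves as a bridge: at the undeformed level, $B\otimes\C_0(G)$ factorises as a product of two \emph{$\Gamma^3$-invariant} subalgebras $D$ and $1\otimes\C_0(G)$; deformation converts this into the corresponding product of Landstad algebras, which is then related to the image of $\Delta_B^\Psi$ via the construction in~\eqref{uptwist}.

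First, I rewrite the continuity $[\Delta_B(B)(1\otimes\C_0(G))]=B\otimes\C_0(G)$ as $[D(1\otimes\C_0(G))]=B\otimes\C_0(G)$, using the non-degenerate inclusion $\C_0(\GG)\subset\M(\C_0(G))$. By Lemma~\ref{yprop} (and direct inspection for $1\otimes\C_0(G)$), both subalgebras are $\Gamma^3$-invariant in $\M(B\otimes\C_0(G))$ and carry continuous $\alpha\otimes(\mu\times\nu)$-actions, so Lemma~\ref{prodalg} applies with the cocycle $\Psi\otimes\tilde\Psi\otimes\Psi$ and yields
\[
(B\otimes\C_0(G))^{\Psi\otimes\tilde\Psi\otimes\Psi}=[D^{\Psi\otimes\tilde\Psi\otimes\Psi}\cdot(1\otimes\C_0(G))^{\Psi\otimes\tilde\Psi\otimes\Psi}].
\]
By the compatibility of Rieffel deformation with tensor-product factorisations of deformation data, the left-hand side equals $B^\Psi\otimes\C_0(G)^{\tilde\Psi\otimes\Psi}$; by Remark~\ref{uwid} applied to the trivial $\alpha$-action on $1$, the second factor on the right equals $1\otimes\C_0(G)^{\tilde\Psi\otimes\Psi}$.

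The main step is to replace $D^{\Psi\otimes\tilde\Psi\otimes\Psi}$ by $\Delta_B^\Psi(B^\Psi)$, i.e.\ to deform the defining identity $D=[\Delta_B(B)(1\otimes\C_0(\GG))]$ to
\[
D^{\Psi\otimes\tilde\Psi\otimes\Psi}=[\Delta_B^\Psi(B^\Psi)\cdot(1\otimes\C_0(\GG)^\Psi)].
\]
This is the main obstacle, because $\Delta_B(B)\subset\M(D)$ is \emph{not} $\Gamma^3$-invariant: a direct computation shows it is stable only under the subgroup $\{(\gamma,\gamma,\gamma'):\gamma,\gamma'\in\Gamma\}$ (consistent with \eqref{Phisom}), so Lemma~\ref{prodalg} does not apply verbatim to the pair $(\Delta_B(B),1\otimes\C_0(\GG))$. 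To get around this, I would pass through the isomorphism $\Phi:B\otimes\C_0(\GG)\to D$ and exploit the triviality of the $(\gamma,\gamma,e)$-action on $D$: Lemma~\ref{iso} identifies the corresponding $(\Psi\otimes\tilde\Psi)$-part of the deformation with the $\Upsilon$-twist, which is precisely the twist used in~\eqref{uptwist} to pass from $\Delta_B^\Gamma$ to $\Delta_B^\Psi$; Lemma~\ref{comact} separates off the remaining $\Psi$-direction, and the functoriality of Remark~\ref{cov} tracks images through these identifications. Once the refined density is in hand, substituting it into the formula from the previous paragraph and using that $\C_0(\GG)^\Psi\subset\M(\C_0(G)^{\tilde\Psi\otimes\Psi})$ remains non-degenerate after deformation (itself a deformation of the corresponding undeformed inclusion) gives
\[
B^\Psi\otimes\C_0(G)^{\tilde\Psi\otimes\Psi}=[\Delta_B^\Psi(B^\Psi)(1\otimes\C_0(G)^{\tilde\Psi\otimes\Psi})];
\]
taking adjoints, which preserves closed linear spans of $*$-closed factors, produces the required Podle\'s density.
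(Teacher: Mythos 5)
Your proposal is correct and takes essentially the same route as the paper's own proof: both arguments use the bridge algebra $D$, obtain $[D^{\Psi\otimes\tilde\Psi\otimes\Psi}(1\otimes\C_0(G)^{\tilde\Psi\otimes\Psi})]=B^\Psi\otimes\C_0(G)^{\tilde\Psi\otimes\Psi}$ by applying Lemma~\ref{prodalg} to the pair $(D,1\otimes\C_0(G))$, and handle the failure of $\Gamma^3$-invariance of $\Delta_B(B)$ exactly as you suggest --- by first deforming in the single $\Gamma$-direction where Lemma~\ref{prodalg} does apply (giving $[\Delta_B^\Gamma(B^\Psi)(1\otimes\C_0(\GG)^\Psi)]=D^\Psi$) and then transporting this identity through the $\Upsilon$-twist isomorphism $D^\Psi\cong D^{\Psi\otimes\tilde\Psi\otimes\Psi}$ of Lemma~\ref{iso}, with Lemma~\ref{comact} and the functoriality of Remark~\ref{cov} tracking the images. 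The remaining differences (order of the steps, and the final adjoint to match the ordering in the Podle\'s condition) are purely cosmetic.
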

\begin{proof}
Let us first note that using Lemma \ref{prodalg} and point (3) of Lemma \ref{yprop} we get
\[[\Delta_B(B)^\Psi(1\otimes\C_0(\Gamma\backslash G)^{\Psi})]=D^{\Psi}.\]
It can be checked that $\Delta_B(B)^\Psi=\Delta_B^\Gamma(B^\Psi)$ where $\Delta_B^\Gamma$ denotes the Rieffel deformation of $\Delta_B$ treated as a morphism from $B$ to $D$ (see the first paragraph on page 8). Applying the isomorphism $D^{\Psi}\rightarrow D^{\Psi\otimes\tilde\Psi\otimes\Psi}$ of Lemma \ref{iso} we get
 \[[\Delta_B^\Psi(B^\Psi)(1\otimes\C_0(\Gamma\backslash G)^{\tilde\Psi\otimes\Psi})]=D^{\Psi\otimes\tilde{\Psi}\otimes\Psi}.\] Moreover, the embedding of $\C_0(\Gamma\backslash G)^{\tilde\Psi\otimes{\Psi}}$ into $\M(\CPG)$ is non-degenerate, hence we see that
\begin{equation}\label{delcon}\begin{array}{rl}[\Delta_B^\Psi(B^\Psi)(1\otimes\CPG)]&\hspace*{-0,25cm}=[\Delta_B^\Psi(B^\Psi)(1\otimes\C_0(\Gamma\backslash G)^{\Psi\otimes\tilde{\Psi}})(1\otimes\CPG)]\\&\hspace*{-0,25cm}=[D^{\Psi\otimes\tilde{\Psi}\otimes\Psi}(1\otimes\CPG)].\end{array}\end{equation}
The continuity of $\Delta_B$: $[\Delta_B(B)(1\otimes\C_0(G))]=[B\otimes\C_0(G)]$ shows that \[[D (1\otimes\C_0(G))]=[D \Delta_B(B)(1\otimes\C_0(G))]=B\otimes\C_0(G).\] In the first equality above we used the non-degeneracy of $\Delta_B\in\Mor(B,D)$: $ [D\Delta_B( B)]=D$. Applying  Lemma \ref{prodalg} to  $D$ and $1\otimes\C_0(G)$ we see that 
\[D^{\Psi\otimes\tilde{\Psi}\otimes\Psi}(1\otimes\C_0(G)^{\tilde\Psi\otimes\Psi})=B^\Psi\otimes\C_0(G)^{\tilde\Psi\otimes \Psi}.\] This together with Eq. \eqref{delcon} ends the proof.
\end{proof}
\end{section} 
\begin{section}{Quantum Minkowski space}
In this section we shall apply the Rieffel deformation to the action of the Lorentz group $G$ (more precisely of $SL(2,\mathbb{C})$) on the Minkowski space $M$, obtaining a $\C^*$-algebraic quantum Minkowski space as a result. For a survey on the quantum Minkowski spaces on a purely algebraic level we refer to \cite{Pod}. There exists a natural extension of our construction to an action of a quantum Poincar\'e group, which gives a $\C^*$-algebraic realization of a family of the quantum Minkowski spaces described in \cite{Pod}. 

As usual, we shall identify $M$ with the set $\mathcal{H}$ of $2\times 2$ hermitian matrices:
\[M\ni(x_0,x_1,x_2,x_3)\mapsto\left(
\begin{array}{cc}x_0+x_3&x_1+ix_2\\
x_1-ix_2&x_0-x_3
\end{array}
\right)\in\mathcal{H}.\] Under this identification the right action of $G$ on $\mathcal{H}$ is given by:
\[\mathcal{H}\times G\ni(h,g)\mapsto g^*hg\in \mathcal{H}.\] 
To perform the Rieffel deformation we use the  subgroup $\Gamma\subset G$ consisting of the 
 diagonal matrices:
\begin{equation}\label{formact}\Gamma=\left\{\left(\begin{array}{cc}e^z&0\\0&e^{-z}\end{array}\right):z\in\mathbb{C}\right\}.\end{equation}
 Our choice of $\Gamma$ is the same as in \cite{Kasp}.
We shall pull back the actions of $\Gamma$ to the actions of $\mathbb{C}$, which is possible due to the morphism $\sigma:\mathbb{C}\rightarrow \Gamma$ given by
 \begin{equation}\label{homrho}\mathbb{C}\ni z\mapsto \sigma(z)=\left(\begin{array}{cc}e^z&0\\0&e^{-z}\end{array}\right)\in\Gamma.\end{equation} To be more precise, if $\alpha:\Gamma\rightarrow \Aut(A)$ is an action of $\Gamma$ on a $\C^*$-algebra $A$, then the formula $\mathbb{C}\ni z\mapsto\alpha_{\sigma(z)}\in\Aut(A)$ defines an action of $\mathbb{C}$ on $A$. It can be shown that all of the constructions of this paper can be performed in the case where we use a continuous group homomorphism  $\sigma:\Gamma\rightarrow G$ instead of a pure embedding $\Gamma\subset G$. 
 
The benefits of pulling back the actions of $\Gamma$ to the actions of $\mathbb{C}$ are related to the self-duality of $\mathbb{C}$ and the simple forms of continuous $2$-cocycles on $\mathbb{C}$. The duality that we shall use in this paper is established by the following bicharacter on $\mathbb{C}$ 
\begin{equation}\label{dualiso}\mathbb{C}^2\ni(z_1,z_2)\mapsto\exp(i\Im(z_1z_2))\in\mathbb{T}.\end{equation}
We shall use the $2$-cocycle $\Psi$ on $\mathbb{C}$ of the form \begin{equation}\label{twococycle}\mathbb{C}^2\ni(z_1,z_2)\mapsto\Psi(z_1,z_2)=\exp(-is\Im(z_1\overline{z}_2))\in\mathbb{T},\end{equation} where $s\in\mathbb{R}$ is the deformation parameter. 
Note that $\Psi$ differs from the $2$-cocycle used in the example presented in the paper \cite{Kasp} by the sign in the exponent. This is related to some sign mistakes that we found in \cite{Kasp} during the preparation of the example for this paper. Some further inconsistencies which the reader may have noticed  are due to the fact that we have corrected the mistakes of \cite{Kasp}. 

Once $\Psi$ and $\Gamma$ have been fixed, we can perform the Rieffel deformation of $\C_0(G)$ and $\C_0(\mathcal{H})$. The analysis of the quantum group $\mathbb{G}^\Psi=(\CPG,\Delta^\Psi)$ was undertaken in \cite{Kasp}. In what follows we shall give a concise description of $\mathbb{G}^\Psi$ in terms of the generators $\ha,\hb,\hg,\hd\,\eta \CPG$. 

Let $\rho$ be the $\mathbb{C}^2$-action on $\C_0(G)$ given by:
\[\rho_{z_1,z_2}(f)(g)=f(\sigma(z_1)^{-1}g\sigma(z_2)),\] where $\sigma$ is the morphism defined by \eqref{homrho}. Using a $2$-cocycle $\tilde\Psi\otimes\Psi$, one can deform the standard  $\mathbb{C}^2$-structure on the crossed product, obtaining $(\C_0(G)\rtimes\mathbb{C}^2,\lambda,\hat\rho^{\tilde\Psi\otimes\Psi})$. It may be checked that our choice of the $2$-cocycle and the  way that we identify $\mathbb{C}$ and $\Hat{\mathbb{C}}$ (see Eq. \eqref{dualiso}) lead to the following formula for the deformed dual action:
\begin{equation}\label{ddmin}\hat\rho^{\tilde\Psi\otimes\Psi}_{z_1,z_2}(b)=\lambda_{-s\bar{z}_1,s\bar{z}_2}\hat\rho_{z_1,z_2}(b)\lambda_{-s\bar{z}_1,s\bar{z}_2}^*.\end{equation} Applying the morphism $\lambda\in\Mor(\C_0(\mathbb{C}^2),\C_0(G)\rtimes\mathbb{C}^2)$ to $\Psi\in\M(\C_0(\mathbb{C}^2))$ we get a unitary element  $U=\lambda(\Psi)\in\M(\C_0(G)\rtimes\mathbb{C}^2)$.  Using $U$ and the coordinate functions $\alpha,\beta,\gamma,\delta$ affiliated with $\C_0(G)$ we define four elements $\ha,\hb,\hg,\hd$ affiliated with $\C_0(G)\rtimes\mathbb{C}^2$:
\begin{equation}\label{defgen}\begin{array}{cc}\ha=U^*\alpha U,&\hb=U\beta U^*\\
										\hg=U\gamma U^*,&\hd=U^*\delta U
										\end{array}.
\end{equation}
The main results of Section 5 of \cite{Kasp} are contained in the following 
\begin{twr}\label{generate}
Let $\ha,\hb,\hg,\hd$ be the elements affiliated $\C_0(G)\rtimes\mathbb{C}^2$ introduced above. Then
\begin{itemize}
\item[1.] $\ha,\hb,\hg,\hd$ are affiliated with $\C_0(G)^{\tilde\Psi\otimes\Psi}$ and they generate it.
\item[2.] They satisfy the following commutation relations:
\[\begin{array}{rcl}
                    \hat{\alpha}\hat{\beta}&\hspace*{-0.25cm}=&\hspace*{-0.25cm}\hat{\beta}\hat{\alpha}\\
                    \hat{\alpha}\hat{\delta}&\hspace*{-0.25cm}=&\hspace*{-0.25cm}\hat{\delta}\hat{\alpha}\\
                    \hat{\alpha}\hat{\gamma}&\hspace*{-0.25cm}=&\hspace*{-0.25cm}\hat{\gamma}\hat{\alpha}\\
                    \hat{\beta}\hat{\gamma}&\hspace*{-0.25cm}=&\hspace*{-0.25cm}\hat{\gamma}\hat{\beta}\\
                    \hat{\beta}\hat{\delta}&\hspace*{-0.25cm}=&\hspace*{-0.25cm}\hat{\delta}\hat{\beta}\\
                    \hat{\gamma}\hat{\delta}&\hspace*{-0.25cm}=&\hspace*{-0.25cm}\hat{\delta}\hat{\gamma}\\
                    \hat{\alpha}\hat{\delta}&\hspace*{-0.25cm}=&\hspace*{-0.25cm} 1+\hat{\beta}\hat{\gamma}

\end{array}
\]\[
\begin{array}{rclrclrccrcl}\hat{\alpha}\hat{\alpha}^*&\hspace*{-0.25cm}=
                    &\hspace*{-0.25cm}\hat{\alpha}^*\hat{\alpha}&&&&&&&&&\\
                    \hat{\alpha}\hat{\beta}^*&\hspace*{-0.25cm}=
                    &\hspace*{-0.25cm}t\hat{\beta}^*\hat{\alpha}&\hat{\beta}\hat{\beta}^*&\hspace*{-0.25cm}=
                    &\hspace*{-0.25cm}\hat{\beta}^*\hat{\beta}&&&&&&\\
                    \hat{\alpha}\hat{\gamma}^*&\hspace*{-0.25cm}=
                    &\hspace*{-0.25cm}t^{-1}\hat{\gamma}^*\hat{\alpha}&\hat{\beta}\hat{\gamma}^*&\hspace*{-0.25cm}=
                    &\hspace*{-0.25cm}\hat{\gamma}^*\hat{\beta}&\hat{\gamma}\hat{\gamma}^*&\hspace*{-0.25cm}=
                    &\hspace*{-0.25cm}\hat{\gamma}^*\hat{\gamma}&&&\\
                    \hat{\alpha}\hat{\delta}^*&\hspace*{-0.25cm}=
                    &\hspace*{-0.25cm}\hat{\delta}^*\hat{\alpha}&\hat{\beta}\hat{\delta}^*&\hspace*{-0.25cm}=
                    &\hspace*{-0.25cm}t^{-1}\hat{\delta}^*\hat{\beta}&\hat{\gamma}\hat{\delta}^*&\hspace*{-0.25cm}=
                    &\hspace*{-0.25cm}t\hat{\delta}^*\hat{\gamma}&\hat{\delta}\hat{\delta}^*&\hspace*{-0.25cm}=
                    &\hspace*{-0.25cm}\hat{\delta}^*\hat{\delta},
\end{array} \] where $t$  in these relations and the deformation parameter $s$ (see \eqref{twococycle}) are related by $t=e^{-8s}$.
\item[3.] The action of  $\Delta^\Psi$  on the generators is given by:
\[\begin{array}{rcl}
\Delta^\Psi(\hat{\alpha})&\hspace*{-0.25cm}=&\hspace*{-0.25cm}\hat{\alpha}\otimes\hat{\alpha}+\hat{\beta}\otimes\hat{\gamma},\\
\Delta^\Psi(\hat{\beta})&\hspace*{-0.25cm}=&\hspace*{-0.25cm}\hat{\alpha}\otimes\hat{\beta}+\hat{\beta}\otimes\hat{\delta},\\
\Delta^\Psi(\hat{\gamma})&\hspace*{-0.25cm}=&\hspace*{-0.25cm}\hat{\gamma}\otimes\hat{\alpha}+\hat{\delta}\otimes\hat{\gamma},\\
\Delta^\Psi(\hat{\delta})&\hspace*{-0.25cm}=&\hspace*{-0.25cm}\hat{\gamma}\otimes\hat{\beta}+\hat{\delta}\otimes\hat{\delta}.
\end{array}\]
\end{itemize}
\end{twr}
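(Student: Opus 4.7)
The plan is to work entirely on the crossed product $\C_0(G)\rtimes\mathbb{C}^2$, where $U=\lambda(\Psi)$ and $\Upsilon=\lambda(\Psi^\star)$ are explicit multipliers and where the conjugation rule $\lambda_{z_1,z_2}\,x\,\lambda_{z_1,z_2}^*=\rho_{z_1,z_2}(x)$ reduces every commutator to a scalar computation. Reading off the weights from $\sigma(z_1)^{-1}g\sigma(z_2)$ one finds that $\alpha,\beta,\gamma,\delta$ are simultaneous $\rho$-eigenvectors with respective weights $(-z_1+z_2)$, $(-z_1-z_2)$, $(z_1+z_2)$, $(z_1-z_2)$; together with the pairing \eqref{dualiso} and the form \eqref{twococycle} of $\Psi$, these weights fix every phase entering the three statements.

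For (1), the three Landstad conditions for each hatted generator must be verified. The only nontrivial one is invariance under the deformed dual action \eqref{ddmin}: the inner $U$-twist in $\ha=U^*\alpha U$ is designed so that the scalar phase produced by $\lambda_{-s\bar z_1,s\bar z_2}$-conjugation in \eqref{ddmin} cancels the cocycle phase picked up when $\hat\rho_{z_1,z_2}$ hits $U$, leaving $\alpha$ untouched. Continuity and the $\C^*(\mathbb{C}^2)$-ideal condition are inherited from the fact that $\alpha,\beta,\gamma,\delta$ are affiliated with $\C_0(G)$. Generation then follows because $\alpha,\beta,\gamma,\delta$ generate $\C_0(G)$ and $U$ is a unitary multiplier, so the $U$-twisted generators generate the corresponding Landstad algebra, with Lemma \ref{prodalg} invoked where needed.

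For (2), each commutator is evaluated by moving $U^{\pm 1}$ past the coordinate functions via the conjugation rule above; the resulting phase is the value of $\Psi$ (or its conjugate) on the relevant pair of weights. The pairs $(\alpha,\beta),(\alpha,\gamma),(\beta,\gamma),(\alpha,\delta),(\beta,\delta),(\gamma,\delta)$ all give vanishing pairings and hence commute, while the four ``$*$-pairs'' $(\alpha,\beta^*),(\alpha,\gamma^*),(\beta,\delta^*),(\gamma,\delta^*)$ pick up exactly the factor $t=e^{-8s}$ or its inverse, the $8$ arising as the sum of the four two-component contributions of the elementary phase $e^{-s}$. The determinant relation $\ha\hd=1+\hb\hg$ follows from $\alpha\delta=1+\beta\gamma$ because the $U$-conjugation phases attached to $\alpha\delta$ and to $\beta\gamma$ coincide (in both cases the sum of the relevant weights vanishes).

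For (3), using the factorization $\Delta^\Psi=\Ad_\Upsilon\circ\Delta^\Gamma$ from Section \ref{rdc} and the matrix-style comultiplication $\Delta(\alpha)=\alpha\otimes\alpha+\beta\otimes\gamma$ etc., I compute
\begin{equation*}
\Delta^\Psi(\ha)=\Upsilon\,\Delta^\Gamma(U^*)(\alpha\otimes\alpha+\beta\otimes\gamma)\Delta^\Gamma(U)\,\Upsilon^*,
\end{equation*}
and then use the $2$-cocycle identity \eqref{sumup} (which is exactly what $\Upsilon$ encodes) to split the $U$-factors between the two tensor legs; after regrouping, the cross-terms reassemble into $\ha\otimes\ha+\hb\otimes\hg$, and the same computation yields the formulas for $\hb,\hg,\hd$. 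The main obstacle is precisely this last regrouping: verifying that the $\Upsilon$-twist separates the two tensor legs into pure $U$-conjugates requires a careful application of \eqref{sumup}, and it is here that the specific choice $\Psi^\star(\hg_1,\hg_2)=\overline{\Psi(\hg_1,-\hg_1-\hg_2)}$ of Lemma \ref{iso} becomes indispensable.
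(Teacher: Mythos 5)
Your proposal should first be set against what the paper actually does with Theorem \ref{generate}: the paper contains no proof of it at all --- the theorem is stated as a summary of ``the main results of Section 5 of \cite{Kasp}'' --- and the only place where this kind of statement is proved in the present paper is the parallel Minkowski-space material (Theorems \ref{chgen} and \ref{comrelxyw} and the computation of $\DHP$). Your skeleton (work inside $\C_0(G)\rtimes\mathbb{C}^2$, read off the $\rho$-weights of $\alpha,\beta,\gamma,\delta$, let the weights fix every scalar) is indeed the skeleton of that method, and your bookkeeping is correct: the weights are as you list them, the diagonal shifts cancel in the invariance check, the determinant relation survives undeformed, and $t=e^{-8s}$ is a product of four factors $e^{-2s}$. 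But two steps that you dismiss are precisely where the mathematical content lies, and as written they fail. First, in part 1 you claim that invariance under \eqref{ddmin} is ``the only nontrivial'' Landstad condition and that generation follows because ``$U$ is a unitary multiplier''. The paper states the opposite in the proof of Theorem \ref{chgen}: for an unbounded element, $\hat\rho^\Psi$-invariance is \emph{necessary but not sufficient} for affiliation with the Landstad algebra. The missing step is to pass to bounded functions, i.e.\ to define $\pi(f)=f(\ha)$ for $f\in\C_0(\mathbb{C})$, verify the Landstad conditions for $\pi(f)$, and then prove the nondegeneracy $[\pi(\C_0(\mathbb{C}))\CPG]=\CPG$ via Lemma 2.6 of \cite{Kasp}; only then is $\ha=\pi(\id)$ affiliated with $\CPG$. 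Likewise, conjugating generators of $\C_0(G)$ by a unitary shows they generate $U\C_0(G)U^*$, not the Landstad algebra $\CPG$, which is a different subalgebra of $\M(\C_0(G)\rtimes\mathbb{C}^2)$; worse, $\ha,\hd$ are twisted by $U^*(\cdot)U$ while $\hb,\hg$ are twisted by $U(\cdot)U^*$, so the four elements do not lie in a common conjugate of $\C_0(G)$ and no such shortcut is available. Generation requires the density argument (linear density of products $f_1(\ha)f_2(\hb)f_3(\hg)f_4(\hd)$) used in Theorem 5.5 of \cite{Kasp} and in Theorem \ref{chgen}.

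Second, in parts 2 and 3 you manipulate unbounded elements as if the computation were scalar. The products $\ha\hb^*$, $\ha\hd$, etc.\ do not a priori exist as affiliated elements; they exist exactly because of Theorem \ref{prodpq}, and the relations have to be established first in the $z$-transform form of Definition \ref{mu2} (as is done for the pair $(\hx,\hw)$ in Theorem \ref{comrelxyw}) and then converted into literal equalities by Theorem \ref{twrs}. Your statement that the scalar is ``the value of $\Psi$ (or its conjugate) on the relevant pair of weights'' cannot be literally right: $\Psi$ is $\mathbb{T}$-valued (and equals $1$ at real arguments), whereas $t=e^{-8s}$ is not unimodular. The factor actually arises from substituting non-self-adjoint operators into Weyl-type relations --- e.g.\ $\ha=\exp(-s(T_l^*+T_r^*))\,\alpha$ and $\hb=\exp(s(T_r^*-T_l^*))\,\beta$, each substitution legitimated by a strong-commutativity statement (for $\ha$, that $\alpha$ strongly commutes with $T_l+T_r$). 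The same mechanism, namely quadratic rearrangements in $T_l,T_r$ justified by strong commutativity, and not the cocycle identity \eqref{sumup} alone, is what makes the term-by-term regrouping of $\Delta^\Psi(\ha)$ into $\ha\otimes\ha+\hb\otimes\hg$ work, and one must additionally check that these summands are strongly commuting normal elements for the sum to be defined. Without these ingredients the proposal is a correct heuristic for why the constants come out as stated, but not a proof.
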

Some comments on this theorem are necessary. In point 1 we used the the fact the embedding  $\C_0(G)^{\tilde\Psi\otimes\Psi}\subset\M(\C_0(G)\rtimes\mathbb{C}^2)$ is the Woronowicz morphism and it extends to the embedding of affiliated elements $(\C_0(G)^{\tilde\Psi\otimes\Psi})^\eta\subset(\C_0(G)\rtimes\mathbb{C}^2)^\eta$ (see \cite{W10}). The commutation relations in  point 2 are to be understood in the sense of $(p,q)$-commuting pairs for appropriate $p$ and $q$ (see Definition \ref{mu2}). In particular, by the results of Section \ref{pqprod} the above commutation relations may be understood literally - all elements in these relations exist as elements affiliated with $\CPG$.  The sums used in point 3 denotes the sums of strongly commuting normal elements affiliated with  $\CPG\otimes\CPG$. The summation operation, which in general cannot be defined for a pair of affiliated elements, in this case gives rise   to  normal elements affiliated with $\C_0(G)^{\tilde\Psi\otimes\Psi}\otimes\C_0(G)^{\tilde\Psi\otimes\Psi}$. 
\begin{subsection}{Generators of $\CHP$}
Let us move on to the analysis of the $\C^*$-algebra $\C_0(\mathcal{H})^\Psi$. It is defined as the Landstad algebra of the $\mathbb{C}$-product $(\CHRC,\lambda,\hat\rho^{\Psi})$. The deformed action $\hat\rho^\Psi$ is given by:
\[\hat\rho^\Psi_{z}(b)=\lambda_{s\bar{z}}\hat\rho_{z}(b)\lambda_{s\bar{z}}^*,\] for any $z\in\mathbb{C}$ and $b\in\C_0(\mathcal{H})\rtimes\mathbb{C}$ (compare with \eqref{ddmin}). 

Let $x,y,w\in\C_0(\mathcal{H})^\eta$ be the matrix coefficient functions on the set of hermitian matrices:
\begin{equation}\label{clgen}\mathcal{H}=\left\{\left(\begin{array}{cc}x&w\\\bar{w}&y\end{array}\right):\,x,y\in\mathbb{R},\,w\in\mathbb{C}\right\}.\end{equation} 
It is obvious that $x,y$ and $w$ generate $\C_0(\mathcal{H})$ in the sense of Woronowicz. Our next objective is to  introduce three elements $\hx,\hy,\hw$ generating $\C_0(\mathcal{H})^\Psi$. In order to do that let us introduce a function $\Omega:\,\mathbb{C}\ni z\mapsto\Omega(z)=\exp(-i\frac{s}{2}\Im(z^2))\in\mathbb{T}$ and a unitary element $V\in\M(\CHRC)$, which is the image of $\Omega\in\M(\C_0(\mathbb{C}))$ under  $\lambda\in\Mor(\C_0(\mathbb{C}),\C_0(\mathcal{H})\rtimes\mathbb{C})$: $V=\lambda(\Omega)$. Using $V$ and the coordinate functions $x,y,w\in\C_0(\mathcal{H})^\eta\subset(\CHRC)^\eta$ we define $\hx,\hy,\hw$ as elements affiliated with $\CHRC$:
\begin{equation}\label{defcoef} \hx=e^{-2s}VxV^*,\,\hy=e^{-2s}Vy V^*,\,\hw=e^{2s}V^*wV.
\end{equation} The multiplicative factors $e^{\pm 2s}$ are introduced to get a nice formulas for the  coaction of the quantum Lorentz  group $\mathbb{G}^\Psi$ on $\CHP$.
\begin{twr}\label{chgen}
Let $\hx,\hy$ and $\hw$ be the elements affiliated with $\CHRC$ defined above. Then $\hx,\hy$ and $\hw$ are affiliated with $\C_0(\mathcal{H})^\Psi$ and they generate it. 
\end{twr}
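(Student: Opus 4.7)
The plan is to prove the theorem in two parts---affiliation of $\hx,\hy,\hw$ with $\C_0(\mathcal{H})^\Psi$, and generation of this algebra---both resting on the single identity
\[
\hat\rho_z(V)=\Omega(z)\,V\,\lambda_{-sz}.
\]
This follows from the quasi-cocycle relation $\Omega(z'+z)=\Omega(z')\Omega(z)\exp(-is\Im(z'z))$ by noting that, under the Fourier-Plancherel identification $\C^*(\mathbb{C})\cong\C_0(\Hat{\mathbb{C}})$ together with the bicharacter \eqref{dualiso}, the character $z'\mapsto\exp(-is\Im(z'z))$ of $\Hat{\mathbb{C}}$ corresponds to the group element $\lambda_{-sz}$.

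For affiliation, I verify the three Landstad conditions on the $z$-transforms $z(\hx)=V\,z_{e^{-2s}}(x)\,V^*$ (and the analogous expressions for $\hy$ and $\hw$). Direct diagonalization of $h\mapsto\sigma(z)^*h\sigma(z)$ from \eqref{formact} produces the explicit weights $\alpha_z(x)=e^{2\Re z}x$, $\alpha_z(y)=e^{-2\Re z}y$, $\alpha_z(w)=e^{-2i\Im z}w$. Combining these with the key identity, a straightforward computation of $\hat\rho^\Psi_z(\cdot)=\lambda_{s\bar z}\hat\rho_z(\cdot)\lambda_{s\bar z}^*$ shows the invariance: the unit-modulus scalar $|\Omega(z)|^2$ drops out, and the $\lambda_{-sz}$ arising from $\hat\rho_z(V)$ combines with the outer $\lambda_{s\bar z}$ to produce an $\alpha$-conjugation of the middle coordinate function whose weight exactly cancels the weight previously introduced. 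The other two Landstad conditions reduce to the corresponding ones for $x,y,w\in\C_0(\mathcal{H})^\eta$, since $V\in\M(\C^*(\mathbb{C}))$ is a unitary in an abelian subalgebra and hence commutes with every $\lambda_\gamma$ and satisfies $V\C^*(\mathbb{C})=\C^*(\mathbb{C})V=\C^*(\mathbb{C})$.

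For generation, let $C\subset\M(\CHRC)$ be the smallest $\C^*$-subalgebra with which $\hx,\hy,\hw$ are affiliated; by the previous step $C\subset\C_0(\mathcal{H})^\Psi$. To obtain equality I apply the Landstad uniqueness criterion (Lemma~2.6 of \cite{Kasp}): it suffices to verify $[\C^*(\mathbb{C})\cdot C\cdot\C^*(\mathbb{C})]=\CHRC$. Because $V$ is a unitary multiplier of $\C^*(\mathbb{C})$, the $\C^*(\mathbb{C})$-bimodule generated by $z(\hx)=V\,z_{e^{-2s}}(x)\,V^*$ coincides with the one generated by $z_{e^{-2s}}(x)$, and similarly for $\hy$ and $\hw$. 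Since $x,y,w$ generate $\C_0(\mathcal{H})$ in the sense of Woronowicz, the standard crossed-product density $[\C^*(\mathbb{C})\cdot\C_0(\mathcal{H})\cdot\C^*(\mathbb{C})]=\CHRC$ now gives the required equality.

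The main obstacle is aligning the several sign and duality conventions---the orientation of the self-duality in \eqref{dualiso}, the sign in the $2$-cocycle \eqref{twococycle}, and the asymmetric placement of $V$ in $\hw=e^{2s}V^*wV$ as opposed to $\hx=e^{-2s}VxV^*$---so that all $\lambda$-weights cancel as claimed. The case of $\hw$ is particularly delicate: here the combined twist parameter reduces to $2s\Re z$, which is real, so that $\alpha$ acts trivially on $w$ and invariance holds. Once this bookkeeping has been carried out, the Landstad verifications and the generation step are routine applications of the framework developed in Section~\ref{sec2}.
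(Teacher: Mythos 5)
Your key identity $\hat\rho_z(V)=\Omega(z)V\lambda_{-sz}$ and the invariance computations built on it (including the observation that for $\hw$ the combined shift parameter is real, so the conjugation acts trivially on $w$) are correct and coincide with the paper's. The first genuine gap is in the affiliation step. You propose to ``verify the three Landstad conditions'' on $z(\hx)=Vz_{e^{-2s}}(x)V^*$, but the third condition \emph{fails}: $z_{e^{-2s}}(x)$ does not vanish at infinity on $\mathcal{H}$, so $\lambda(g)z(\hx)\notin\CHRC$; the $z$-transform of $\hx$ is a multiplier of $\CHP$, never an element of it. What your remaining two conditions yield is at best $z(\hx)\in\M(\CHP)$, and this is \emph{not} sufficient for $\hx$ to be affiliated with $\CHP$: one must additionally prove the density condition, i.e.\ that $f\mapsto f(\hx)$ is a nondegenerate morphism from $\C_0(\mathbb{R})$ to $\CHP$, equivalently $[\,\{f(\hx):f\in\C_0(\mathbb{R})\}\CHP]=\CHP$. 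The paper flags exactly this point (invariance ``is a necessary condition\dots but it is not sufficient'') and closes it by invoking Lemma 2.6 of \cite{Kasp} and proving the chain of equalities ending in $[\pi(\C_0(\mathbb{R}))\CHP\C^*(\mathbb{C})]=\CHRC$. Your proposal contains no counterpart of this step.

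The generation argument also breaks down. Setting aside that ``the smallest $\C^*$-subalgebra with which $\hx,\hy,\hw$ are affiliated'' is not a well-defined object (Woronowicz generation is a universal property, and bounded functions of affiliated elements are in general only multipliers, so even the inclusion $C\subset\CHP$ requires proof), the decisive error is the bimodule cancellation. The identity $[\C^*(\mathbb{C})z(\hx)\C^*(\mathbb{C})]=[\C^*(\mathbb{C})z_{e^{-2s}}(x)\C^*(\mathbb{C})]$ is true generator by generator, but the classical density $[\C^*(\mathbb{C})\C_0(\mathcal{H})\C^*(\mathbb{C})]=\CHRC$ rests on \emph{products} $f_1(x)f_2(y)f_3(w)$ --- no single coordinate generates $\C_0(\mathcal{H})$ --- and for products the unitaries do not cancel. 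Indeed, because of the opposite placement of $V$ in $\hw=e^{2s}V^*wV$ versus $\hx=e^{-2s}VxV^*$, one has
\[
f_1(x)f_2(y)f_3(w)=V^*f_1'(\hx)f_2'(\hy)\,V^2\,f_3'(\hw)\,V^*
\]
(with $f_i'$ suitable rescalings of $f_i$), and the factor $V^2$ is trapped in the middle: it cannot be absorbed into the outer copies of $\C^*(\mathbb{C})$ because $V$ does not commute with $w$. This non-cancellation is precisely the source of the noncommutativity of $\CHP$ recorded in Theorem \ref{comrelxyw}; an argument that reduced the density statement to the undeformed one would, in the same breath, trivialize the deformation. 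The paper therefore does not argue this way: it proves that the products $f_1(\hx)f_2(\hy)f_3(\hw)$ are linearly dense in $\CHP$ by the arguments of Theorem 5.5 of \cite{Kasp}, which genuinely work with the deformed product, and deduces Woronowicz generation from that density.
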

\begin{proof} The proof follows the same line as the proof of the respective theorem concerning elements $\ha,\hb,\hg,\hd$ given in \cite{Kasp}.
Let us  check that $\hx$ is $\hat\rho^\Psi$-invariant:
\begin{equation}\label{defx}\begin{array}{rl}
\hrp_{w}(\hx)&=e^{-2s}\lambda_{s\bar{w}}\hat\rho_{w}(VxV^*)\lambda_{s\bar{w}}^*\\&=e^{-2s}\hat\rho_{w}(V)\,\lambda_{s\bar{w}} x\lambda_{s\bar{w}}^*\,\hat\rho_{w}(V)^*
\end{array}\end{equation}
In order to calculate $\hat\rho_w(V)$ we use the fact that $\lambda\in\Mor(\C_0(\mathbb{C}),\CHRC)$ intertwines the dual action $\hat\rho_{w}$ with the shift action of $\mathbb{C}$ on $\C_0(\mathbb{C})$. It is easy to see that \[\Omega(z+w)=\Omega(z)\Omega(w)\exp(-is\Im(zw)).\] This formula and the way that we identify $\mathbb{C}$ with $\Hat{\mathbb{C}}$ (see Eq. \eqref{dualiso}) enable us to see that 
\[\hat\rho_w(V)=V\Omega(w)\lambda_{-sw}.\] We may now substitute the above equality into \eqref{defx} to obtain
\begin{equation}\label{defx1}\hrp_{w}(\hx)=e^{-2s}V\lambda_{-sw+s\bar{w}}x\lambda_{sw+s\bar{w}}^*V^*.\end{equation} Finally, using the fact that $\lambda_{z}$ implements the action $\rho_{z}$ we get $\lambda_{z} x \lambda_{z}^*=e^{z+\bar{z}}x$. In particular $\lambda_{-sw+s\bar{w}}x\lambda_{sw+s\bar{w}}^*=x$, which substituted into $\eqref{defx1}$ gives \[\hrp_{w}(\hx)=\hx.\] The $\hat\rho^\Psi$-invariance of $\hx\in(\C_0(\mathcal{H})\rtimes\mathbb{C})^\eta$ is a necessary condition  to prove that $\hx$ is affiliated with $\C_0(\mathcal{H})^\Psi$, but it is not sufficient. To this end let us define a morphism $\pi:\C_0(\mathbb{R})\rightarrow\M(\CHRC)$:\,\,
$\pi(f)=f(\hx)$. Obviously, $\pi(f)$ is $\hrp$-invariant for any $f$. Furthermore, the map $\mathbb{C}\ni z\mapsto\lambda_z\pi(f)\lambda_{z}^*\in\M(\CHRC)$ is norm continuous. The latter statement follows from the following computation:
\[\lambda_z\pi(f)\lambda_{z}^*=\lambda_z f(\hx)\lambda_{z}^*=V\lambda_z f(e^{-2s}x)\lambda_{z}^*V^*=Vf(e^{-2s}e^{z+\bar z}x)V^*.\]
Hence we see that $\pi(f)$ satisfies the sufficient conditions to be an element of $\M(\C_0(\mathcal{H})^\Psi)$. 

Let us now show that $\pi\in\Mor(\C_0(\mathbb{R}),\C_0(\mathcal{H})^\Psi)$. In order to do that we have to check the nondegeneracy $[\pi(\C_0(\mathbb{R}))\CHP]=\CHP$. Invoking Lemma 2.6 of \cite{Kasp} it follows from the equality $[\pi(\C_0(\mathbb{R}))\CHP\C^*(\mathbb{C})]=\C_0(\mathcal{H})\rtimes\Gamma$, which we prove as follows:
\begin{align*}[\pi(\C_0(\mathbb{R}))\CHP\C^*(\mathbb{C})]&=[\pi(\C_0(\mathbb{R}))\C_0(\mathcal{H})\rtimes\Gamma]\\
&=[\pi(\C_0(\mathbb{R}))\C^*(\Gamma)\C_0(\mathcal{H})]\\
&=[V\{f(x)|f\in\C_0(\mathbb{R})\}V^*\C^*(\Gamma)\C_0(\mathcal{H})]\\
&=[V\{f(x)|f\in\C_0(\mathbb{R})\}\C^*(\Gamma)\C_0(\mathcal{H})]\\
&=[V\{f(x)|f\in\C_0(\mathbb{R})\}\C_0(\mathcal{H})\rtimes\Gamma]\\
&=[V\C_0(\mathcal{H})\rtimes\Gamma]=\C_0(\mathcal{H})\rtimes\Gamma.
\end{align*}
In the fifth equality we used the fact that $x\in(\C_0(\mathcal{H})\rtimes\Gamma)^\eta$ while in the third and the sixth equality we used the unitarity of $V$. We see that $\pi\in\Mor(\C_0(\mathbb{R}),\C_0(\mathcal{H})^\Psi)$, hence  $\hx=\pi(\id)$ is affiliated with $\C_0(\mathcal{H})^\Psi$.

In a similar way one can also prove   that $\hy$ and $\hw$ are affiliated with $\C_0(\mathcal{H})^\Psi$. Our next objective is to show that $\hx,\hy$ and $\hw$ generate $\CHP$ in the sense of Woronowicz. This follows from the fact that the subset of $\M(\CHP)$ given by
\[\{f_1(\hx)f_2(\hy)f_3(\hw)|\,f_1,f_2\in\C_0(\mathbb{R}),\,f_3\in\C_0(\mathbb{C})\} \] is in fact a linearly dense subset of $\CHP$. To prove this density we use the same arguments that were used in the proof of Theorem 5.5 of \cite{Kasp}. 
\end{proof}
Let us move on to the analysis of the commutation relations for $\hx$, $\hy$ and $\hw$. It is easy to see that $\hx$ and $\hy$  strongly commute. We shall show that the relations between $\hx$ and $\hw$ and between $\hy$ and $\hw$ are of the $(p,q)$-type in the sense of Definition \ref{mu2}. 
\begin{twr}\label{comrelxyw}
Let $\hx,\hy,\hw$ be the generators of $\CHP$ introduced above and $t=e^{-8s}$ where $s\in\mathbb{R}$ is the deformation parameter that specifies the $2$-cocycle \eqref{twococycle}. Then $(\hx,\hw)$ and $(\hy,\hw)$ are respectively a $(t^{-1},t)$ and $(t,t^{-1})$-commuting pair of normal elements affiliated with $\CHP$. 
\end{twr}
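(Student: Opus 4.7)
My plan is to work inside the crossed product $\CHRC$, where $V=\lambda(\Omega)$ and the coordinate functions $x,y,w$ sit concretely, and to realize $V$ as the exponential of a simple bilinear expression in two commuting self-adjoint generators of $\C^*(\mathbb{C})$. Identifying $\C^*(\mathbb{C})\cong\C_0(\Hat{\mathbb{C}})$ via the duality \eqref{dualiso}, let $P_1,P_2$ be the commuting self-adjoint elements affiliated with $\C^*(\mathbb{C})$ corresponding to the real and imaginary parts of the coordinate on $\Hat{\mathbb{C}}\cong\mathbb{C}$. Then $\lambda_z=\exp(iz_1P_2+iz_2P_1)$ for $z=z_1+iz_2$ with $z_i\in\mathbb{R}$, and since $\Im(z^2)=2z_1z_2$ we have $V=\lambda(\Omega)=\exp(-isP_1P_2)$.

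The commutators of $P_1,P_2$ with the generators follow from the action \eqref{formact}: from $\alpha_z(x)=e^{z+\bar z}x$, $\alpha_z(y)=e^{-(z+\bar z)}y$, $\alpha_z(w)=e^{\bar z-z}w$ together with $\lambda_z a\lambda_z^*=\alpha_z(a)$, differentiation at the origin gives $[P_1,x]=[P_1,y]=[P_2,w]=0$, $[P_2,x]=-2ix$, $[P_2,y]=2iy$, and $[P_1,w]=-2w$. Each such commutator returns a scalar multiple of the same element, so the Baker--Campbell--Hausdorff series for conjugation by $V$ collapses and yields the closed forms
\[
\hx=e^{-2s(1+P_1)}\,x,\qquad\hy=e^{2s(P_1-1)}\,y,\qquad\hw=e^{2s}e^{-2isP_2}\,w.
\]
In each case the scalar prefactor depends on only one of the $P_i$ and hence commutes with the accompanying coordinate, which immediately gives that $\hx,\hy$ are self-adjoint and that $\hw$ is normal with $\hw\hw^*=\hw^*\hw=e^{4s}w^*w$.

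With the formulas in place, the commutation is obtained by moving $x$ through the prefactor $A_{\hw}:=e^{2s}e^{-2isP_2}$ of $\hw$ and $w$ through the prefactor $A_{\hx}:=e^{-2s(1+P_1)}$ of $\hx$. The commutators $[P_2,x]=-2ix$ and $[P_1,w]=-2w$ give the exchange rules $xA_{\hw}=e^{4s}A_{\hw}x$ and $wA_{\hx}=e^{-4s}A_{\hx}w$; combined with $xw=wx$ and $[A_{\hx},A_{\hw}]=0$ I obtain
\[
\hx\hw=e^{4s}A_{\hx}A_{\hw}\,xw,\qquad\hw\hx=e^{-4s}A_{\hx}A_{\hw}\,xw,
\]
whence $\hx\hw=e^{8s}\hw\hx=t^{-1}\hw\hx$. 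Repeating with $\hw^*=w^*e^{2s}e^{2isP_2}$ and $[P_1,w^*]=2w^*$ flips the sign and produces $\hx\hw^*=e^{-8s}\hw^*\hx=t\hw^*\hx$. The analogous computation for $(\hy,\hw)$, whose only difference is the opposite sign $[P_2,y]=2iy$, yields the mirror identities $\hy\hw=t\hw\hy$ and $\hy\hw^*=t^{-1}\hw^*\hy$.

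The one step that is not purely formal is passing from these scalar operator identities, of the form $RS=p^2SR$ and $RS^*=q^2S^*R$, to the $z$-transform relations that constitute Definition~\ref{mu2}; this is the normal-operator converse of Theorem~\ref{twrs} and is the main (though routine) technical obstacle. In our situation it is transparent: each of $\hx,\hy,\hw$ lies in the joint bounded functional calculus of a commuting pair ($\{x,P_1\}$ for $\hx$, $\{y,P_1\}$ for $\hy$, $\{w,P_2\}$ for $\hw$), so the bounded transforms $z(\cdot)$ and $z_s(\cdot)$ are obtained by functional calculus on that same pair and inherit the exchange rules between $\{x,P_1\}$ (resp.\ $\{y,P_1\}$) and $\{w,P_2\}$ verbatim. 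This establishes $(\hx,\hw)\in D_{t^{-1},t}(\CHP)$ and $(\hy,\hw)\in D_{t,t^{-1}}(\CHP)$.
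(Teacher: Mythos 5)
Your setup coincides with the paper's: the paper also writes $\lambda_z=\exp(i\Im(zT))$ (your $T=P_1+iP_2$), realizes $V=\exp\bigl(-i\tfrac{s}{2}\Im(T^2)\bigr)=\exp(-isP_1P_2)$, and derives the closed form $\hw=e^{2s}w\exp(-2is\Im(T))$, which is exactly your $e^{2s}e^{-2isP_2}w$; your formulas for $\hx,\hy$ and all the cross relations are correct. The problem is in your last two paragraphs, i.e.\ at the only step that actually constitutes the theorem. First, the ``scalar operator identities'' $\hx\hw=t^{-1}\hw\hx$, $\hx\hw^*=t\hw^*\hx$ have no a priori meaning in the $\C^*$-framework used here: by Theorem~\ref{prodpq} the product of the affiliated elements $\hx$ and $\hw$ is only \emph{defined} once the pair is known to be $(p^2,q^2)$-commuting in the sense of Definition~\ref{mu2} --- which is precisely what is to be proved. (The paper is careful on this point: the only unbounded products it ever forms, such as $w\exp(-2is\Im(T))$, are products of strongly commuting elements.) So the middle of your argument is heuristic and everything rests on the final paragraph. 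Second, that paragraph is not correct as stated: the pairs $\{x,P_1\}$ and $\{w,P_2\}$ do \emph{not} commute with one another --- you computed the cross relations yourself: conjugation by $e^{itP_2}$ scales $x$ by $e^{2t}$, and conjugation by $e^{itP_1}$ multiplies $w$ by the phase $e^{-2it}$. Hence bounded functions of one pair do not ``inherit the exchange rules verbatim'' against bounded functions of the other; they satisfy exchange rules with \emph{shifted parameters}, and it is exactly this shift that produces the subscripts $z_{pq}$ and $z_{q/p}$ in Definition~\ref{mu2}. Asserting verbatim inheritance assumes the conclusion.

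What has to be done --- and what the paper's proof actually is --- is the bounded-operator bookkeeping. One writes $z(\hw)=z_{e^{2s}}(w)\exp(-2isP_2)$ (legitimate because $w$ and $P_2$ strongly commute), then uses three verifiable identities: $\exp(-2isP_2)\,z_a(\hx)\exp(2isP_2)=z_{ae^{-4s}}(\hx)$ (the integrated form of your $[P_2,x]=-2ix$, since conjugation by $\exp(-2isP_2)$ sends $\hx\mapsto e^{-4s}\hx$); $z_a(\hx)=Vz_{ae^{-2s}}(x)V^*$ together with $z_{e^{2s}}(w)V=Vz_{e^{2s}}(w)\exp(-2isP_2)$ (the integrated form of your $[P_1,w]=-2w$); and the commutation of $z_a(x)$ with $z_b(w)$, which genuinely do commute as functions in the commutative algebra $\C_0(\mathcal{H})$. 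Chaining these gives $z(\hw)z(\hx)=z_{e^{-8s}}(\hx)z(\hw)$, which is condition~2 of Definition~\ref{mu2} with $q/p=t$, $pq=1$; condition~1 then follows by taking adjoints and using the self-adjointness of $\hx$, and similarly for $(\hy,\hw)$. You have assembled all the ingredients for this computation, and your closed forms make it short, but the computation itself (not an appeal to functional calculus ``transparency'') is the proof; as written, your proposal skips it.
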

\begin{proof}
Let us introduce an affiliated element $T\in(\CHRC)^\eta$ such that $\lambda_z=\exp(i\Im(zT))$. Using the fact that $\lambda$ implements the action of $\mathbb{C}$ on $\C(\mathcal{H})$ we get:
\begin{align}\label{eqw}e^{\bar{z}-z}w&=\exp(i\Im(zT))w \exp(-i\Im(zT)),\\
e^{\bar{z}-z}w&=\exp(i\Im(zT^*))w \exp(-i\Im(zT^*)).
\end{align}
In particular, the affiliated element $(T-T^*)\in(\CHRC)^\eta$ strongly commutes with $w\in(\CHRC)^\eta$. The unitary $V$ (see \eqref{defcoef}) can be expressed by $T$: $V=\exp(-i\frac{s}{2}\Im(T^2))$. Using this we see that:
\begin{align*}\hw&=e^{2s}\exp\left(i\frac{s}{2}\Im(T^2)\right)w \exp\left(-i\frac{s}{2}\Im(T^2)\right)\\
&=e^{2s}\exp\left(i\frac{s}{2}\Im(T(T-T^*))\right)w \exp\left(-i\frac{s}{2}\Im(T(T-T^*))\right)\\&=e^{2s}\exp(-sT+sT^*)w=e^{2s}w\exp(-2is\Im(T)).
\end{align*}
In the second equality we used the fact that $\Im(TT^*)=0$ and in the third equality we used Eq. \eqref{eqw} with $z$ replaced by $\frac{s}{2}(T-T^*)$ (this is legitimate since $T-T^*$ and $w$ strongly commute).
The product $w\exp(-is\Im(T))$ is well defined due to the fact that $w$ and $T-T^*$ strongly commute (see Theorem \ref{prodpq}).
Using the above considerations 
and the easy to check equality \[\exp(-2is\Im(T))z(\hx)\exp(2is\Im(T))=z_{e^{-4s}}(\hx)\] we see that:
\begin{align*}z(\hw)z(\hx)&=
z_{e^{2s}}(w)\exp(-2is\Im(T))z(\hx)\\&=z_{e^{2s}}(w)z_{e^{-4s}}(\hx)\exp(-2is\Im(T))\\
&=z_{e^{2s}}(w)Vz_{e^{-6s}}(x)V^*\exp(-2is\Im(T))\\&=Vz_{e^{2s}}(w)\exp(-2is\Im(T))z_{e^{-6s}}(x)V^*\exp(-2is\Im(T))\\&=
Vz_{e^{2s}}(w)z_{e^{-10s}}(x)V^*\exp(-4is\Im(T))\\&=Vz_{e^{-10s}}(x)V^*z_{e^{2s}}(w)\exp(-2is\Im(T))=z_{e^{-8s}}(\hx)z(\hw).
\end{align*}
This shows that $\hx$ and $\hw$ satisfy the second identity of Definition \ref{mu2} of a $(t,t^{-1})$-commuting pair. Using the fact that $\hx$ is self-adjoint and taking the adjoint of the above calculation we may see that $(\hx,\hw)$ is in fact  an example of $(t,t^{-1})$-commuting pair of normal elements. 

A similar reasoning shows that the pair $(\hy,\hw)$ is an example of a $(t^{-1},t)$-commuting pair. 
\end{proof}

\end{subsection}

\begin{subsection}{Coaction of $\mathbb{G}^\Psi$ on $\CHP$}\label{cosub}
Let $\mathbb{G}^\Psi$ be the quantum Lorentz group described in Theorem \ref{generate}. 
From the results of Section \ref{rdc} we know that there exists a  continuous right coaction $\DHP$  of $\mathbb{G}^\Psi$ on $\CHP$.
The aim of this section is to describe $\DHP$ in terms of its action on generators $\hx,\hy,\hw\in(\CHP)^\eta$. 

The coaction $\Dh$ of $(\C_0(G),\Delta)$ on $\CH$ when applied to generators $x,y,w\in\CH^\eta$ gives
\begin{equation}\label{coactform}\begin{array}{rl}\Dh(x)&\hspace*{-0,25cm}=x\otimes\alpha^*\alpha+w\otimes\alpha^*\gamma+w^*\otimes\gamma^*\alpha+y\otimes \gamma^*\gamma,\\
\Dh(y)&\hspace*{-0,25cm}=x\otimes\beta^*\beta+w\otimes\beta^*\delta+w^*\otimes\delta^*\beta+y\otimes\delta^*\delta,\\
\Dh(w)&\hspace*{-0,25cm}=x\otimes\alpha^*\beta+w\otimes\alpha^*\delta+w^*\otimes\gamma^*\beta+y\otimes\gamma^*\delta.
\end{array}
\end{equation} In what follows we shall show that in the case of $\DHP$ the only change is that one has to add a hat over each affiliated element above.
\begin{twr} Let $\mathbb{G}^\Psi$ be the quantum group described in Theorem \ref{generate}, $\CHP$ be the $\C^*$-algebra described in Theorem \ref{chgen} and $\DHP$ be the coaction of $\mathbb{G}^\Psi$ on $\CHP$ described in the beginning of Section \ref{cosub}.
The action of $\DHP$ on the generators $\hx,\hw,\hy\in(\CHP)^\eta$ is given by 
\begin{align*}
\DHP(\hx)&=\hx\otimes\ha^*\ha+\hw\otimes\ha^*\hg+\hw^*\otimes\hg^*\ha+\hy\otimes \hg^*\hg,\\
\DHP(\hw)&=\hx\otimes\ha^*\hb+\hw\otimes\ha^*\hd+\hw^*\otimes\hg^*\hb+\hy\otimes\hg^*\hd,\\
\DHP(\hy)&=\hx\otimes\hb^*\hb+\hw\otimes\hb^*\hd+\hw^*\otimes\hd^*\hb+\hy\otimes\hd^*\hd,
\end{align*}
where on the right hand side of each of these equalities we have the sums of strongly commuting elements affiliated with $\CHP\otimes\CPG$.
\end{twr}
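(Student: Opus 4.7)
The plan is to carry out the computation at the crossed-product level, where by construction $\DHP=\Ad_{1\otimes\Upsilon}\circ\Delta_\mathcal{H}^\Gamma$, with $\Delta_\mathcal{H}^\Gamma$ the crossed-product extension of $\Delta_\mathcal{H}$ from $\CH\rtimes\mathbb{C}$ into $(\CH\rtimes\mathbb{C})\otimes(\C_0(G)\rtimes\mathbb{C}^2)$ and $\Upsilon\in\M(\C_0(G)\rtimes\mathbb{C}^2)$ the intertwiner of Lemma~\ref{iso}, viewed as sitting in the second tensor slot. The three formulas are established by the same scheme; I sketch only the one for $\DHP(\hx)$.

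First I determine $\Delta_\mathcal{H}^\Gamma(V)$. Since $V=\lambda(\Omega)$ belongs to the $\C^*(\mathbb{C})$-subalgebra of $\M(\CH\rtimes\mathbb{C})$, and the intertwining $\Delta_\mathcal{H}(\alpha_z b)=(\id\otimes\nu_z)\Delta_\mathcal{H}(b)$ forces $\Delta_\mathcal{H}^\Gamma(\lambda_z)=1\otimes\lambda_{0,z}$ in the manner of~\eqref{delcr}, functoriality of $\lambda$ yields $\Delta_\mathcal{H}^\Gamma(V)=1\otimes V'$, where $V'\in\M(\C_0(G)\rtimes\mathbb{C}^2)$ is the image of $\Omega$ under the second-slot embedding of $\C_0(\mathbb{C})$. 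Expanding $\hx=e^{-2s}VxV^*$, applying~\eqref{coactform} for $\Delta_\mathcal{H}(x)$, and conjugating by $1\otimes\Upsilon$ then gives
\begin{align*}
\DHP(\hx)=e^{-2s}\bigl[&x\otimes\Upsilon V'\alpha^*\alpha V'^*\Upsilon^*+w\otimes\Upsilon V'\alpha^*\gamma V'^*\Upsilon^*\\
&+w^*\otimes\Upsilon V'\gamma^*\alpha V'^*\Upsilon^*+y\otimes\Upsilon V'\gamma^*\gamma V'^*\Upsilon^*\bigr].
\end{align*}

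The remaining task is to identify this with the asserted expression. Both $\Upsilon V'$ and $U=\lambda(\Psi)$ lie in the image of the morphism $\lambda:\C_0(\mathbb{C}^2)\to\M(\C_0(G)\rtimes\mathbb{C}^2)$, so, using the covariance relations $\lambda_{z_1,z_2}\alpha\lambda_{z_1,z_2}^*=e^{-z_1+z_2}\alpha$ and $\lambda_{z_1,z_2}\gamma\lambda_{z_1,z_2}^*=e^{z_1+z_2}\gamma$, I compute the adjoint action of $\Upsilon V'$ on each monomial $q\in\{\alpha^*\alpha,\alpha^*\gamma,\gamma^*\alpha,\gamma^*\gamma\}$ as the $U$-adjoint action on $q$ multiplied by an explicit scalar. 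The scalar is determined by the combined form of $\Omega(z)=\exp(-i\tfrac s2\Im(z^2))$ and $\Psi(z_1,z_2)=\exp(-is\Im(z_1\bar z_2))$, and combining it with the prefactor $e^{-2s}$ exactly reproduces the $V^{\pm 1}$-conjugations and $e^{\pm 2s}$-factors from the definitions $\hx=e^{-2s}VxV^*$, $\hw=e^{2s}V^*wV$, $\hy=e^{-2s}VyV^*$. The net effect is to replace each $p\in\{x,w,w^*,y\}$ by the corresponding hatted generator in $\{\hx,\hw,\hw^*,\hy\}$ and each $q$ by the corresponding hatted product, giving the asserted formula. The analogous computations starting from $\Delta_\mathcal{H}(y)$ and $\Delta_\mathcal{H}(w)$ in~\eqref{coactform} yield the formulas for $\DHP(\hy)$ and $\DHP(\hw)$; strong commutativity of the four summands on each right-hand side, ensuring the sums define normal affiliated elements of $\CHP\otimes\CPG$, follows from the commutation relations of Theorems~\ref{generate} and~\ref{comrelxyw}.

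The main obstacle lies precisely in this last identification: one must verify, monomial by monomial, that the single adjoint action by $\Upsilon V'$ on the second tensor factor produces both the correct $U$-conjugation (yielding $\ha$ and $\hg$) and the complementary scalar needed to absorb the $V^{\pm 1}$-conjugations on $x,y,w,w^*$. This compatibility—the analytic content of the ``add a hat everywhere'' rule of the theorem—reflects that the deformations of $\CH$ and of $\C_0(G)$ are built from the same $2$-cocycle $\Psi$ via parallel crossed-product constructions.
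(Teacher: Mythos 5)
Your overall strategy---work at the crossed-product level, compute $\Delta_{\mathcal{H}}^\Gamma(V)$, and identify the four terms of \eqref{coactform} one by one---is also the paper's strategy, and your formula $\Delta_{\mathcal{H}}^\Gamma(V)=1\otimes V'$ is correct. But the proposal breaks down at its first structural claim: the unitary $\Upsilon$ of Lemma \ref{iso} does \emph{not} sit in the second tensor slot. That lemma is applied to the algebra $D\subset\M(\CH\otimes\C_0(G))$ carrying the $\Gamma^2$-action $\alpha\otimes\mu$, and $\Upsilon=\lambda(\Psi^\star)$ is built from the unitaries $\lambda_{z_1}\otimes\lambda_{z_2,0}\in\M(\CHRC\otimes\CGRC)$ implementing that action; since $\alpha$ acts on the \emph{first} leg, $\Upsilon$ genuinely couples the two legs. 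Concretely, with the cocycle \eqref{twococycle} the full twist used for $\hx$ is the paper's
\[Y=\exp\left(-i\frac{s}{2}\Im(1\otimes T_r^2+2T\otimes T_l^*)\right)=(1\otimes V')\exp\left(-is\Im(T\otimes T_l^*)\right),\]
and the cross factor $\exp(-is\Im(T\otimes T_l^*))$ is not of the form $1\otimes(\cdot)$. Consequently your displayed intermediate formula, in which the first legs stay equal to $x,w,w^*,y$ and all twisting happens in the second leg, is false.

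This is not a reparable bookkeeping issue, because no scalar can convert $x\otimes(\cdot)$ into $\hx\otimes(\cdot)$: the hatted generators are not scalar multiples of the unhatted ones. For instance, the proof of Theorem \ref{comrelxyw} gives $\hw=e^{2s}w\exp(-2is\Im(T))$, a product of $w$ with a nontrivial unitary, and likewise $\hx=e^{-2s}VxV^*$ is not proportional to $x$ (indeed $\hx$ is affiliated with $\CHP$, not with $\CH$). The mechanism that puts hats on the first legs is precisely conjugation by the cross term of $Y$: the paper proves, e.g., \eqref{dx1} by rewriting the exponent via \eqref{dx1.5} as $(T\otimes 1-1\otimes T_l^*)^2-1\otimes(T_r-T_l^*)^2$ and using that $T\otimes 1-1\otimes T_l^*$ and $1\otimes(T_r-T_l^*)$ strongly commute with $x\otimes\alpha^*\alpha$ (Eq. \eqref{dx2}), with analogous square-completions and strong-commutation arguments (plus the $(p,q)$-commutation identities of Theorem \ref{twrs}) handling the mixed terms $w\otimes\alpha^*\gamma$, $w^*\otimes\gamma^*\alpha$, $y\otimes\gamma^*\gamma$. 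Your last paragraph correctly senses that this identification is ``the main obstacle,'' but the scheme you set up---second-slot twist plus scalar absorption---cannot carry it out; the argument must be rebuilt around the two-leg twist $Y$, at which point it becomes exactly the paper's computation.
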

\begin{proof}
In the course of the proof we shall use the  affiliated element $T\in(\CHRC)^\eta$ such that $\lambda_z=\exp(i\Im(zT))$ (see the proof of Theorem \ref{comrelxyw}). We shall also use $T_l,T_r\in(\CGRC)^\eta$ such that $\lambda_{z_1,z_2}=\exp(i\Im(z_1T_l+z_2T_r))$. 
The generators $\hx,\hy,\hw$ of $\CHP$ may be expressed in terms of $T$ and the coordinates $x,y,w\in\CH^\eta$: \begin{align}\label{defhx}\hx&=e^{-2s}\exp\left(-i\frac{s}{2}\Im(T^2)\right)x\exp\left(i\frac{s}{2}\Im(T^2)\right),\\
\label{defhy}\hy&=e^{-2s}\exp\left(-i\frac{s}{2}\Im(T^2)\right)y\exp\left(i\frac{s}{2}\Im(T^2)\right),\\
\label{defhw}\hw&=e^{2s}\exp\left(i\frac{s}{2}\Im(T^2)\right)w\exp\left(-i\frac{s}{2}\Im(T^2).\right)\end{align}
Similarly, the generators $\ha,\hb,\hg,\hd$ of $\CPG$ may be expressed in terms of $T_l$, $T_r$ and $\alpha,\beta,\gamma,\delta$:
\begin{align}\label{defha}
\ha&=\exp\left(-is\Im(T_rT_l^*)\right)\alpha\exp\left(is\Im(T_rT_l^*)\right),\\
\label{defhb}\hb&=\exp\left(is\Im(T_rT_l^*)\right)\beta\exp\left(-is\Im(T_rT_l^*)\right),\\
\label{defhg}\hg&=\exp\left(is\Im(T_rT_l^*)\right)\gamma\exp\left(-is\Im(T_rT_l^*)\right),\\
\label{defhd}\hd&=\exp\left(-is\Im(T_rT_l^*)\right)\delta\exp\left(is\Im(T_rT_l^*)\right).
\end{align}

Let us move on to the proof of the equality \begin{equation}\label{chdx}\Dh^\Psi(\hx)=\hx\otimes\ha^*\ha+\hw\otimes\ha^*\hg+\hw^*\otimes\hg^*\ha+\hy\otimes \hg^*\hg.\end{equation} From the definition of $\Dh^\Psi$ (see Section \ref{rdc}) we get:
\begin{equation}\label{dx}\Dh^\Psi(\hx)=e^{-2s}Y(x\otimes\alpha^*\alpha+w\otimes\alpha^*\gamma+w^*\otimes\gamma^*\alpha+y\otimes \gamma^*\gamma)Y^*\end{equation}
where $Y\in\M(\CHRC\otimes\CGRC)$ is the unitary element given by \[Y=\exp\left(-i\frac{s}{2}\Im(1\otimes T_r^2+2T\otimes T_l^*)\right).\]
In what follows we shall analyze the four terms appearing in \eqref{dx}, proving that they are equal to the corresponding four terms appearing on the right side of \eqref{chdx}. Let us first show that 
\begin{equation}\label{dx1}e^{-2s}Y(x\otimes\alpha^*\alpha)Y^*=\hx\otimes\ha^*\ha.\end{equation}
Substituting the formula \eqref{defhx} for $\hx$ and the formula \eqref{defha} for  $\ha$ in Eq. \eqref{dx1} we get an equivalent form of \eqref{dx1}:
\[X(x\otimes\alpha^*\alpha)X^*=(x\otimes\alpha^*\alpha),\]
where the unitary element  $X\in\M(\CHRC\otimes\CGRC)$ is given by \[\exp\left(i\frac{s}{2}\Im(T^2\otimes 1-2T\otimes T^*_l-1\otimes (T_r^2-2T_rT_l^*))\right).\]
 It is easy to check that 
\begin{equation}\label{dx1.5} T^2\otimes 1-2T\otimes T^*_l-1\otimes (T_r^2-2T_rT_l^*)=(T\otimes 1-1\otimes T_l^*)^2-1\otimes(T_r-T_l^*)^2.\end{equation}
The element
$T\otimes 1-1\otimes T_l^*$ strongly commutes with $x\otimes\alpha^*\alpha$, which follows from the equality: 
\begin{equation}\label{dx2}\Big(\exp(i\Im(zT))\otimes\exp(-i\Im(zT_l^*))\Big)(x\otimes\alpha^*\alpha)\Big(\exp(-i\Im(zT))\otimes\exp(i\Im(zT_l^*))\Big)=(x\otimes\alpha^*\alpha).\end{equation}
Similarly, the element $1\otimes(T_r-T_l^*)$ strongly commutes with $x\otimes\alpha^*\alpha$ and  using Eq. \eqref{dx1.5} we see that \eqref{dx1} is satisfied.

 Our next objective is to prove that:
\begin{equation}\label{dx3}e^{-2s}Y(w\otimes\alpha^*\gamma)Y^*=\hw\otimes\ha^*\hg.\end{equation}
To this end, let us note that
\[2T\otimes T_l^*=(T\otimes 1+1\otimes T_l^*)^2-T^2\otimes 1-1\otimes T_l^{*2}.\]
It can be checked that $T\otimes 1+1\otimes T_l^*$ strongly commutes with $w\otimes\alpha^*\gamma$, hence Eq. \eqref{dx3} is equivalent to
\[e^{-2s}Y'(w\otimes\alpha^*\gamma)Y'^*=\hw\otimes\ha^*\hg,\] where $Y'=\exp\left(-i\frac{s}{2}\Im(1\otimes T_r^2-1\otimes T_l^{*2}-T^2\otimes 1)\right)$.
Using formula \eqref{defhw} for $\hw$ we see that \[e^{-2s}Y'(w\otimes\alpha^*\gamma)Y'^*=\hw\otimes\exp(-4s)\exp\left(-i\frac{s}{2}\Im(T_r^2- T_l^{*2})\right)\alpha^*\gamma\exp\left(i\frac{s}{2}\Im(T_r^2-T_l^{*2})\right).\] Therefore to show \eqref{dx3} we have to check that \begin{equation}\label{dx4}\exp(-4s)\exp\left(-i\frac{s}{2}\Im(T_r^2-T_l^{*2})\right)\alpha^*\gamma\exp\left(i\frac{s}{2}\Im(T_r^2- T_l^{*2})\right)=\ha^*\hg.\end{equation} 
The identity \[\exp(i\Im(zT_l^*))\alpha^*\gamma\exp(-i\Im(zT_l^*))=\exp(z-\bar{z})T_l\] with $z$ replaced by $\frac{s}{2}(T_l^*-T_l)$ (this is legitimated by the strong commutativity of $\alpha^*\gamma$ and $T_l^*-T_l$) shows that
\begin{equation}\label{dx5}\exp\left(i\frac{s}{2}\Im(T_l^{*2})\right)\alpha^*\gamma\exp\left(-i\frac{s}{2}\Im(T_l^{*2})\right)=\exp(sT_l^*-sT_l)\alpha^*\gamma.\end{equation} Similarly, using the identity \[\exp(i\Im(zT_r))\alpha^*\gamma\exp(-i\Im(zT_r))=\exp(z+\bar{z})T_l\] with $z$ replaced by $-\frac{s}{2}(T_r^*+T_r)$ (which in turn  is legitimated by the strong commutativity of $\alpha^*\gamma$ and $T_r^*+T_r$) we get 
\begin{equation}\label{dx6}\exp\left(-i\frac{s}{2}\Im(T_r^2)\right)\alpha^*\gamma\exp\left(i\frac{s}{2}\Im(T_r^2)\right)=\exp(-sT_r^*-sT_r)\alpha^*\gamma.\end{equation}
Equations \eqref{dx5} and \eqref{dx6} together show that the left hand side of \eqref{dx4} is given by 
\[\exp(-4s)\exp(s(T_l^*-T_l-T_r^*-T_r))\alpha^*\gamma.\] This is equal to the right hand side of \eqref{dx4}, because \begin{equation}\label{dx7}\ha^*\hg=\exp(-s(T_l+T_r))\alpha^*\exp(s(T_l^*-T_r^*))\gamma=\exp(-4s)\exp(s(T_l^*-T_l-T_r^*-T_r))\alpha^*\gamma.\end{equation}
To prove the above equality we first use the identity 
\begin{equation}\label{dx7.1}\ha=\exp(-is\Im(T_rT_l^*))\alpha\exp(is\Im(T_rT_l^*))=\exp(-sT_l^*-sT_r^*)\alpha\end{equation} which in turn follows from the equality 
\[\exp(i\Im(zT_r)\alpha\exp(-i\Im(zT_r)=\exp(z)\alpha\] with $z$ replaced $-sT_l^*-sT_r^*$ (as the reader may expect, one has to invoke at this moment the strong commutativity of $\alpha$ and $T_l+T_r$ to legitimate this argument). Similarly, we can show that
\[\hg=\exp(sT_l^*-sT_r^*)\gamma.\] Finally, in the second equality of \eqref{dx7} we have used the fact \[\alpha^*\exp(s(T_l^*-T_r^*))=\exp(-4s)\exp(s(T_l^*-T_r^*))\alpha^*,\] which can be proved using the framework of $(p,q)$-commuting pairs of elements affiliated with $\C_0(G)\rtimes\mathbb{C}^2$. This ends the proof of \eqref{dx3}, which by taking the adjoint also shows that 
\begin{equation}\label{dx8}e^{-2s}Y(w^*\otimes\gamma^*\alpha)Y^*=\hw^*\otimes\hg^*\ha.\end{equation} Let us now check that 
\begin{equation}\label{dx9} e^{-2s}Y(y\otimes\gamma^*\gamma)Y^*=\hy\otimes\hg^*\hg.\end{equation} The reasoning is similar to the one which proved \eqref{dx1}. We begin by substituting formula \eqref{defhy} for  $\hy$ and \eqref{defhg} for $\hg$ in Eq. \eqref{dx9} to obtain the equivalent equality:
\begin{equation}\label{dx10}X(y\otimes\gamma^*\gamma)X^*=(y\otimes\gamma^*\gamma).\end{equation}
In this case $X\in\M(\CHRC\otimes\CGRC)$ is given by \[\exp\left(i\frac{s}{2}\Im(T^2\otimes 1-2T\otimes T^*_l-1\otimes (T_r^2+2T_rT_l^*))\right).\]
Let us note that
\[ T^2\otimes 1-2T\otimes T^*_l-1\otimes (T_r^2+2T_rT_l^*)=(T\otimes 1-1\otimes T_l^*)^2-1\otimes(T_r+T_l^*)^2.\]
Equation \eqref{dx10} follows from the fact that both operators $T\otimes 1-1\otimes T_l^*$ and $1\otimes(T_r+T_l^*)$ strongly commute with $y\otimes \gamma^*\gamma$. 
Eq. \eqref{dx1} together with \eqref{dx3}, \eqref{dx8} and \eqref{dx9} proves \eqref{chdx}.
It follows from construction that the sum on the right is the sum of strongly commuting, normal elements affiliated with $\CHP\otimes\CPG$. 

Let us move on to the proof of the second equality of our theorem: 
\begin{equation}\label{dwch}
\DHP(\hw)=\hx\otimes\ha^*\hb+\hw\otimes\ha^*\hd+\hw^*\otimes\hg^*\hb+\hy\otimes\hg^*\hd,
\end{equation}
By the definition of $\DHP$ (see Section \ref{rdc}) it may be seen that
\begin{equation}\label{dw}\DHP(\hw)=e^{2s}Y(x\otimes\alpha^*\beta+w\otimes\alpha^*\delta+w^*\otimes\gamma^*\beta+y\otimes\gamma^*\delta)Y^*,
\end{equation}
where in this case $Y\in\M(\CHRC\otimes\CGRC)$ is a unitary element of the following form 
\begin{equation}\label{dw0.5}Y=\exp\left(i\frac{s}{2}\Im(1\otimes T_r^2-2T\otimes T_l^*)\right).\end{equation}
As before, we shall analyze the four terms appearing on the right hand side of Eq. \eqref{dw}. In order to show that
\begin{equation}\label{dw1} e^{2s}Y(x\otimes\alpha^*\beta)Y^*=\hx\otimes\ha^*\hb
\end{equation}
we use the equality:
\begin{equation}\label{dw3.5}-2T\otimes T_l^*=(T\otimes 1-1\otimes T_l^*)^2-T^2\otimes 1-1\otimes T_l^{*2}.\end{equation} The fact that 
$T\otimes 1-1\otimes T_l^*$ strongly commutes with $x\otimes\alpha^*\beta$ leads to the following equality (see also \eqref{dx4}):
\begin{equation}\label{dw2}
\exp(4s)\exp\left(i\frac{s}{2}\Im(T_r^2-T_l^{*2})\right)\alpha^*\beta\exp\left(-i\frac{s}{2}\Im(T_r^2- T_l^{*2})\right)=\ha^*\hb, 
\end{equation} which we check below.
Using 
\[\exp(i\Im(zT_r))\alpha^*\beta\exp(-i\Im(zT_r))=e^{\bar{z}-z}\alpha^*\beta\] with $z$ replaced by $\frac{s}{2}(T_r-T_r^*)$ we get 
\begin{equation}\label{dw3}\exp\left(i\frac{s}{2}\Im(T_r^2)\right)\alpha^*\beta\exp\left(-i\frac{s}{2}\Im(T_r^2)\right)=\exp(sT_r^*-sT_r)\alpha^*\beta.\end{equation}
Similarly, we may show that 
\begin{equation}\label{dw4}
\exp\left(-i\frac{s}{2}\Im(T_l^{*2})\right)\alpha^*\beta\exp\left(i\frac{s}{2}\Im(T_l^{*2})\right)=\exp(-sT_l-sT_l^*)\alpha^*\beta.
\end{equation}
Eqs. \eqref{dw3} and \eqref{dw4} show that the left hand side of \eqref{dw2} is equal to \begin{equation}\label{dw5}e^{4s}\exp(sT_r^*-sT_r-sT_l^*-sT_l)\alpha^*\beta\end{equation} Eq. \eqref{dx7.1} gives 
\[\ha^*=\exp(-sT_l-sT_r)\alpha^*.\] Similarly, using \eqref{defhb} we can check that \[\hb=\exp(sT_r^*-sT_l^*)\beta.\]
The above two equalities give:
\begin{equation}\label{dw6}\ha^*\hb=\exp(-sT_l-sT_r)\alpha^*\exp(sT_r^*-sT_l^*)\beta=e^{4s}\exp(sT_r^*-sT_r-sT_l^*-sT_l)\alpha^*\beta,\end{equation}
where in the final step we used \[\alpha^*\exp(sT_r^*-sT_l^*)=e^{4s}\exp(sT_r^*-sT_l^*)\alpha^*.\] The last formula can be proved in the framework of $(p,q)$-commuting pairs of elements affiliated with $\CGRC$ (see Theorem \ref{twrs}). Using \eqref{dw5} and \eqref{dw6}  we get \eqref{dw1}. 

Our next objective is to prove that
\begin{equation}\label{dw7} e^{2s}Y(w\otimes\alpha^*\delta)Y^*=\hw\otimes\ha^*\hd.
\end{equation} Inserting formula \eqref{defhw}, \eqref{defha} and \eqref{defhd} for $\hw$, $\ha$ and $\hd$ respectively  we see that the above equality is equivalent with the following one 
\begin{equation}\label{dw8}
Y'(w\otimes\alpha^*\delta)Y'^*=(w\otimes\alpha^*\delta),
\end{equation}
where $Y'\in\M(\CHRC\otimes\CGRC)$ is a unitary element given by \[Y'=\exp\left(i\frac{s}{2}\Im(1\otimes T_r^2-2T\otimes T_l^*-T^2\otimes1+2\otimes T_rT_l^*)\right).\] It is easy to see that
\[1\otimes T_r^2-2T\otimes T_l^*-T^2\otimes1+2\otimes T_rT_l^*=(1\otimes T_r+1\otimes T_l^*)^2-(T\otimes 1+1\otimes T_l^*)^2.\] 
Eq.
\eqref{dw8} follows from the observation that both elements 
$1\otimes(T_r+T_l^*)$ and $(T\otimes 1+1\otimes T_l^*)$ strongly commute with $(w\otimes\alpha^*\delta)$. 

The proof of the equality 
\begin{equation}\label{dw9} e^{2s}Y(w^*\otimes\gamma^*\beta)Y^*=\hw^*\otimes\hg^*\hb
\end{equation} is similar to the proof of \eqref{dw7}. It is based on the observation that the element 
\[1\otimes T_r^2-1\otimes 2T_rT_l^*-T^2\otimes 1-2T\otimes T_l^*=1\otimes (T_r-T_l^*)^2-(T\otimes 1+1\otimes T_l^*)^2\] strongly commutes with $w^*\otimes\gamma^*\beta$.

Finally, let us check that 
\[e^{2s}Y(y\otimes\gamma^*\delta)Y^*=\hy\otimes\hg^*\hd,\] where $Y$ is given by \eqref{dw0.5}. 
The following reasoning is similar to the one which proved \eqref{dw1}. Using 
\eqref{dw3.5} together with the strong commutativity  of  $T\otimes 1-1\otimes T_l^*$ and  $y\otimes\gamma^*\delta$ we get an equivalent formula, which is 
\begin{equation}\label{dw10}
\exp(4s)\exp\left(i\frac{s}{2}\Im(T_r^2-T_l^{*2})\right)\gamma^*\delta\exp\left(-i\frac{s}{2}\Im(T_r^2- T_l^{*2})\right)=\hg^*\hd.
\end{equation}
It can be shown (compare with \eqref{dw3} and \eqref{dw4}) that 
\begin{equation}\label{dw11}
\exp\left(i\frac{s}{2}\Im(T_r^2)\right)\gamma^*\delta\exp\left(-i\frac{s}{2}\Im(T_r^2)\right)=\exp(sT_r^*-sT_r)\gamma^*\delta.
\end{equation}
\begin{equation}\label{dw12}
\exp\left(-i\frac{s}{2}\Im(T_l^{*2})\right)\gamma^*\delta\exp\left(i\frac{s}{2}\Im(T_l^{*2})\right)=\exp(sT_l+sT_l^*)\gamma^*\delta.
\end{equation}
These two identities show that the left hand side of \eqref{dw10} is given by 
\[e^{4s}\exp(sT_l+sT_l^*+sT_r^*-sT_r)\gamma^*\delta\] and moreover, it is equal to the right hand side by the following computation (compare with \eqref{dw6})
\[\hg^*\hd=\exp(sT_l-sT_r)\gamma^*\exp(sT_r^*+sT_l^*)\delta=e^{4s}\exp(sT_l+sT_l^*+sT_r^*-sT_r)\gamma^*\delta.\]
In the second equality we used the fact that \[\gamma^*\exp(sT_r^*+sT_l^*)=e^{4s}\exp(sT_r^*+sT_l^*)\gamma^*,\] which can be proved in the framework of $(p,q)$-commuting pairs of elements affiliated with  $\CGRC$ (see Theorem \ref{twrs}).
The equalities \eqref{dw1}, \eqref{dw7}, \eqref{dw9} and \eqref{dw10} together with \eqref{dw} imply \eqref{dwch}.

The proof of the equality 
\[\DHP(\hy)=\hx\otimes\hb^*\hb+\hw\otimes\hb^*\hd+\hw^*\otimes\hd^*\hb+\hy\otimes\hd^*\hd,\]  
is very similar to the proof of \eqref{chdx} and is left to the reader.
\end{proof}
 \end{subsection}
\end{section}

\end{document}